\newtheorem{definition}{Definition}
\newtheorem{proposition}{Proposition}
\begin{document}
	
	\author{Camilo Jose Torres-Jimenez\footnote{Corresponding author. Candidate, Ph.D. Program in Statistics. Universidad Nacional de Colombia - Sede Bogota, Colombia. Email address: \texttt{cjtorresj@unal.edu.co}. Postal address: Carrera 109 No. 136 A - 36, Bogotá, Colombia.}
	\and
	Alvaro Mauricio Montenegro-Diaz\footnote{Associate Professor. Department of Statistics. Universidad Nacional de Colombia - Sede Bogota, Colombia.}}
	
	\title{An alternative to continuous univariate distributions supported on a bounded interval: The BMT distribution}
	
	\maketitle
	
	\begin{abstract}
		In this paper, we introduce the BMT distribution as an unimodal alternative to continuous univariate distributions supported on a bounded interval.
		The ideas behind the mathematical formulation of this new distribution come from computer aid geometric design, specifically from Bezier curves.
		First, we review general properties of a distribution given by parametric equations and extend the definition of a Bezier distribution.
		Then, after proposing the BMT cumulative distribution function, we derive its probability density function and a closed-form expression for quantile function, median, interquartile range, mode, and moments.
		The domain change from [0,1] to [c,d] is mentioned.
		Estimation of parameters is approached by the methods of maximum likelihood and maximum product of spacing.
		We test the numerical estimation procedures using some simulated data. 
		Usefulness and flexibility of the new distribution are illustrated in three real data sets.
		The BMT distribution has a significant potential to estimate domain parameters and to model data outside the scope of the beta or similar distributions.
	\end{abstract}
	
	\textbf{Keywords}: 
		beta distribution; 
		Bezier curves;
		Bezier distribution;
		domain parameters;
		Kumaraswamy distribution;
		maximum product of spacing estimation.
	
	\textbf{AMS MSC 2010}: 
		60E05; 
		62F10; 
		68U07. 
	
	\section{Introduction}
	
	The well known beta distribution is frequently the first choice for doubled-bounded data. Additionally, the Kumaraswamy distribution is a viable alternative with some common characteristics and also some advantages over the beta distribution \citep{jones2009kumaraswamy}. On the downside, the Kumaraswamy does not have exact symmetric shapes or close-form expressions for its moments. Yet, some reparametrizations using a simple formula for the median help modeling purposes \citep{mitnik2013kumaraswamy}. 
	 
	Recently, there has been a bloom of new distributions that arise combining existing ones. For example, \citet{cordeiro2011new} stated a methodology to produce new distributions using the Kumaraswamy. At first sight that methodology is most likely extensible for any distribution on $[0,1]$, providing as many distributional families as possible combinations. 
	
	On the other hand, we have the generalizations. Some of them seek more shapes for a distribution family, e.g. the generalized beta \citep{mcdonald1995generalization}, and other go after a unique formula or expression that brings together several known distributions, e.g. the Johnson translation system \cite[Section 4.3]{johnson1996continuous}. Unfortunately in both cases, the number of parameters increases, and not all of them have a recognizable interpretation.
	
	Having so many options, \citet{jones2015families} did a widespread review and comparison of main general techniques for providing mostly unimodal distribution families on $\mathbb{R}$. The reviewed distributions usually have one, two, or even three shape parameters controlling skewness and/or tail-weight, in addition to their location and scale parameters. \citet{jones2015families} identifies four different construction techniques: Family 1: Azzalini-Type Skew-Symmetric Distributions; Family 2: Transformation of Random Variable; Family 3: Transformation of Scale, including Family 3A: Two-Piece; and, Family 4: Probability Integral Transformation of $(0, 1)$ Random Variable. In that review, it is also mentioned that all these construction techniques can be redefined to obtain distributions with a bounded support. In our opinion, Jones' most important conclusion about so many existent distributions is that: ``The ongoing challenge is to extract from the overwhelming plethora of possibilities those relatively few with the best and most appropriate properties that are of real potential value in practical applications.''
	
	In this work, we propose a parametric distribution family looking for a useful alternative to existing ones supported on a bounded interval. Our distribution brings something new over the existing options, and it has great potential for practical applications. 
	
	This new distribution was obtained outside the techniques mentioned by \citet{jones2015families}. It originates from Bezier curves, parametric equations frequently used in graphic computation. The Bezier curves are very flexible and important in computer design and engineering modeling. By establishing some conditions, the parametric equations of Bezier curves fulfill the requirements of a cumulative distribution function. Hence, those curves can provide diverse shapes as a distribution family. \citet{wagner1996using} proposed a distribution based on Bezier curves to model the input of engineering processes. After that, Bezier curves have rarely appeared in the probability and statistics area \citep{wagner1995graphical,wagner1996recent,kim1996nonparametric,kim1999smoothing,kim2000simulation,kim2003bezier,kotz2004other,kuhl2010univariate,kim2012general,bae2014nonparametric,cha2016bezier}. 
	
	Initially, without knowledge of the work of \citet{wagner1996using}, we put together a cumulative distribution function based on Bezier curves. Our main motivations Bezier curves parametric equations are their capacity of being molded to assume a desired form and their useful mathematical properties. We name the resultant distribution family BMT, as an acronym for Bezier-Montenegro-Torres. The BMT distribution has the following features: finite support, small number of interpretable parameters, symmetric and skewed unimodal shapes, and close-form formulas for quantile function (therefore median and interquartile range), mode, and moments.

	After establishing some mathematical characteristics of the BMT distribution, we study two different estimation methods for the parameters of the distribution. Maximum likelihood and maximum product of spacing are tested, given us satisfactory numerical outcomes, at least with the simulations and applications considered.
	
	As a result, we obtain two interesting and important things about the proposed distribution family. 
	First, there are values of (population) skewness and kurtosis that are possible for the BMT distribution, while said values are not reachable with the beta and Kumaraswamy distribution families. Thus, if we use as a criterion the sample skewness and kurtosis, there are datasets that should be modeled with the BMT instead of the beta and Kumaraswamy distributions.
	Second, the 4-parameter BMT distribution works much better than the equivalent 4-parameter version of the beta and Kumaraswamy distributions. That occurs because estimates for domain parameters under the BMT distribution come considerably closer to the minimum and maximum of the sample than the mentioned competing distributions. 
	
	The rest of the paper is structured as follows: In Section \ref{sec:background}, some characteristics of Bezier curves, generic distributions given by parametric equations, and the Bezier distribution are presented. In Section \ref{sec:cdf_pdf}, cumulative distribution and probability density functions for the BMT distribution are characterized and described. In Section \ref{sec:quantile}, quantile function, median, interquartile range and a sampling procedure are established. In Section \ref{sec:moments}, moments of a BMT random variable are considered. In Section \ref{sec:BMT4}, the BMT distribution is extended from $[0,1]$ to $[c,d]$. In Section \ref{sec:estimation}, two estimation methods, maximum likelihood and maximum product of spacing, are reviewed. In Section \ref{sec:applications}, the potential and usefulness of the BMT distribution are depicted through the fitting of three real data sets. In Section \ref{sec:conclusions}, concluding remarks, observations, and future work are addressed.
	
	\section{Background} \label{sec:background}
	
	In this section, we point out properties and results about Bezier curves, generic distributions given by parametric equations, and the Bezier distribution. 
	
	\subsection{Bezier curves}
	
	In computer graphics, a Bezier curve \citep{bezier1977essai} is used to approximate smooth shapes, especially for the computer aided geometric design (CAGD) \citep{farin2002curves}. The curve is represented by parametric equations given by polynomials, which can be expressed in different ways: the Bernstein form, the de Casteljau's algorithm (recursive form), the polynomial form, and the matrix form. For example, the Bernstein form of a Bezier curve is given by, 
	\begin{equation}  \label{eq:Bezier}
		\mathrm{\mathbf{b}}^n(t) = \sum_{i=0}^n \mathbf{b}_i \, \mathrm{B}_i^n(t), 
	\end{equation}	
	where $t \in [0,1]$, $\mathbf{b}_0, \mathbf{b}_1, \dots, \mathbf{b}_n \in \mathbb{R}^d$ are the Bezier control points, and $\{\mathrm{B}_i^n(t), i=0,\dots,n\}$ are the $n+1$ Bernstein basis polynomials of degree $n$ \cite[Section 5.1]{farin2002curves}. Each Bernstein polynomial is defined explicitly by $\mathrm{B}_i^n(t) = \binom{n}{i} t^i (1-t)^{n-i}$.
	
	Some properties of a Bezier curve are: affine invariance, invariance under affine parameter transformations, convex hull property, endpoint interpolation, symmetry with respect to $t$ and $1-t$, invariance under baricentric combinations, linear precision, and pseudolocal control \cite[Section 5.2]{farin2002curves}. 
	
	Affine invariance is an important property of Bezier curves. It means that they are invariant under affine maps. Some examples of affine maps are translations, scalings, rotations, shears, and parallel projections. 
	
	Another useful property of Bezier curves is the closed-form expressions for their derivatives. From the Bernstein form, the $r$-th derivative of a Bezier curve is given by, 
	\begin{equation} \label{eq:dBezier}
	\frac{d^r}{\, dt^r} \mathrm{\mathbf{b}}^n(t) = \frac{n!}{(n-r)!} \sum_{i=0}^{n-r} \Delta^r \mathbf{b}_{i} \, \mathrm{B}_{i}^{n-r}(t),
	\end{equation}
	where $t \in [0,1]$, and $\Delta^r\mathbf{b}_{j}=\sum_{i=0}^r \binom{r}{i} \, (-1)^{r-i} \, \mathbf{b}_{i+j}$ \cite[Section 5.3]{farin2002curves}.
	
	\subsection{Any distribution given by parametric equations}
	
	The Bezier distribution proposed by \citet{wagner1996using} shows that continuous distributions given by parametric equations have already been worked in the literature. However, we did not find a summary of general properties of such distributions in our bibliographic review.
	
	If a curve given by parametric equations, $x = \mathrm{x}(t)$ and $y_\mathrm{F} = \mathrm{y}_\mathrm{F}(t)$, fulfills the conditions of a cumulative distribution function (CDF), then there is a random variable $X$ with CDF $\mathrm{F}_X$ described by that curve and,
	\[\mathrm{F}_X(x) = \mathrm{y}_\mathrm{F} \left( \mathrm{x}^{-1}(x) \right).\]
	
	It follows that, if it exists, the probability density function (PDF) of $X$ can be given by the parametric equations, 
	\[x = \mathrm{x}(t) \quad \text{ and } \quad y_\mathrm{f} = \mathrm{y}_\mathrm{f}(t) = \frac{\mathrm{y}'_\mathrm{F}(t)}{\mathrm{x}'(t)},\] 
	or by the function,
	\[\mathrm{f}_X(x) = \frac{\mathrm{y}'_\mathrm{F} \left( \mathrm{x}^{-1}(x) \right) }{\mathrm{x}' \left( \mathrm{x}^{-1}(x) \right) }.\] 
	
	The quantile function is given by the parametric equations, $p = \mathrm{y}_\mathrm{F}(t)$ and $y_\mathrm{Q} = \mathrm{x}(t)$, i.e., 
	\[\mathrm{F}^{-1}_X(p) = \mathrm{x} \left( \mathrm{y}_\mathrm{F}^{-1}(p) \right).\] 
	
	The $r$-th central moment, for $r \in \mathbb{Z}^+$, and the characteristic function of $X$ are: 
	\[\mu_X^{(r)} = \int \big( \mathrm{x}(t) - \mu \big)^r \mathrm{y}'_\mathrm{F}(t) \, dt \quad \text{ and } \quad \psi_X(s) = \int \exp \left(i s \mathrm{x}(t)\right) \mathrm{y}'_\mathrm{F}(t) \, dt,\]
	respectively. 
	
	\subsection{Bezier distribution}
	
	\citet{wagner1996using} propose the following definition for the Bezier distribution.
	
	\begin{definition}\label{def:BezierDist1}
		If $X$ is a Bezier continuous random variable with bounded support $[x_*,x^*]$, then the CDF of $X$ is given parametrically by,
		\begin{align}
			\mathbf{b}^n(t) &= \big( \mathrm{b}^n_1(t), \mathrm{b}^n_2(t) \big)^T = \big( \mathrm{x}(t) , \mathrm{F}_X \left( \mathrm{x}(t) \right) \big)^T ,
		\end{align}
		where $t \in [0,1]$ and the Bezier control points $\mathbf{b}_0, \mathbf{b}_1, \dots \mathbf{b}_n \in \mathbb{R}^2$ fulfill,
		\begin{enumerate}
			\item[(i)] $\mathbf{b}_0 = (b_{0,1},b_{0,2})^T = (x_*,0)^T$ and $\mathbf{b}_n = (b_{n,1},b_{n,2})^T = (x^*,1)^T$,
			\item[(ii)] $x_* \leq b_{1,1} \leq \dots \leq b_{n-1,1} \leq x^*$ and $0 \leq b_{1,2} \leq \dots \leq b_{n-1,2} \leq 1$.
		\end{enumerate}
	\end{definition}

	With Definition \ref{def:BezierDist1}, the Bezier distribution family is only a subset of all the CDFs that could be described by a Bezier curve. We propose the following, more general, definition.
	
	\begin{definition}\emph{(Bezier distribution)}\label{def:BezierDist2}
		$X$ is a Bezier random variable, when the CDF of $X$ is given by the parametric equations,
		\begin{subequations} \label{eq:EBezier}
			\begin{equation} \label{eq:EBezierX}
			\mathrm{x}(t) = \mathrm{b}^n_1(t) I_{[0,1]}(t) + \left((b_{n,1} - b_{0,1}) t + b_{0,1}\right) I_{(-\infty,0)\cup(1,\infty)}(t),
			\end{equation}
			\begin{equation}  \label{eq:EBezierY}
			\mathrm{y}_\mathrm{F}(t) = \mathrm{F}_X \left( \mathrm{x}(t) \right) = \mathrm{b}^n_2(t) \mathrm{I}_{[0,1)} \big( t \big) + \mathrm{I}_{[1,\infty)} \big( t \big),
			\end{equation} 
		\end{subequations}
		where $t \in \mathbb{R}$ and $\mathrm{\mathbf{b}}^n(t) = \big( \mathrm{b}^n_1(t), \mathrm{b}^n_2(t) \big)^{T}$ is a Bezier curve, with Bezier control points $\mathbf{b}_0 = \left(b_{0,1},b_{0,2}\right)^{T}, \mathbf{b}_1 = \left(b_{1,1},b_{1,2}\right)^{T}, \dots, \mathbf{b}_n = \left(b_{n,1},b_{n,2}\right)^{T} \in \mathbb{R}^2$ that fulfill the following conditions: 
		\begin{enumerate}
			\item[(i)] $0 \leq b_{0,2}$ and $b_{n,2} \leq 1$,
			\item[(ii)] $\sum_{i=0}^{n-1} \left( b_{i+1,1} - b_{i,1} \right) \mathrm{B}_i^{n-1}(t) \geq 0$ and $\sum_{i=0}^{n-1} \left( b_{i+1,2} - b_{i,2} \right) \mathrm{B}_i^{n-1}(t) \geq 0$.
		\end{enumerate}
	\end{definition}
	
	Figure \ref{fig:CubicBeziercdfpdf} give us an example of a Bezier distribution under Definition \ref{def:BezierDist2}. This example distribution is supported on $(b_{0,1},b_{3,1})^{T}$. Its associated Bezier curve has four control points (so it is a cubic Bezier curve), and the image of that curve is $(b_{0,2},b_{3,2})^{T} \subset [0,1]$. Also, attraction points $\mathbf{b}_1$ and $\mathbf{b}_2$ determine the slope of the CDF tangent lines at $\mathbf{b}_0$ and $\mathbf{b}_3$, respectively.
	
	\begin{figure}[t]
		\centering
		\begin{subfigure}{0.475\textwidth}
			\includegraphics[width=\textwidth]{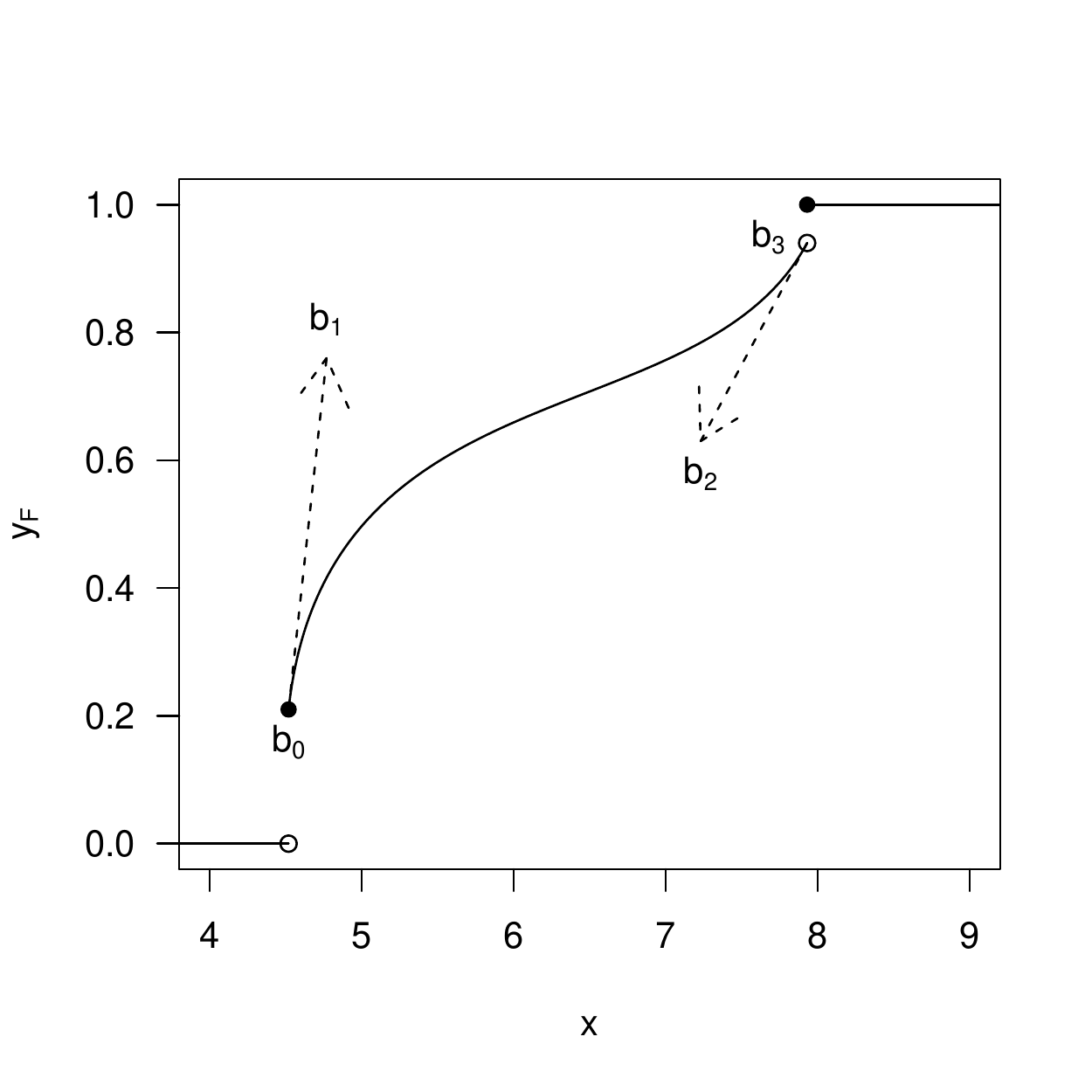}
			\caption{Cumulative distribution function.}
			\label{fig:CubicBeziercdf}
		\end{subfigure}
		\begin{subfigure}{0.475\textwidth}
			\includegraphics[width=\textwidth]{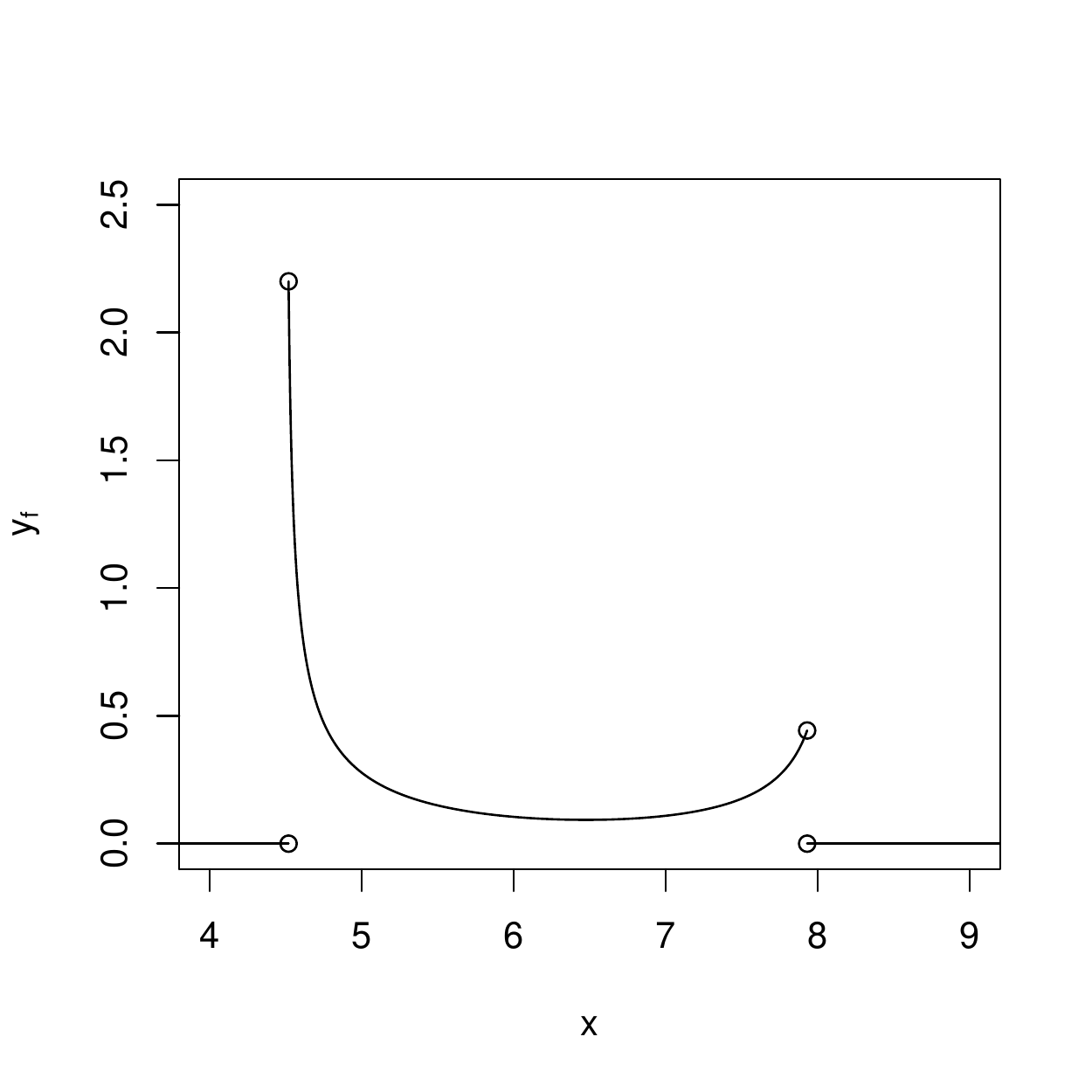}
			\caption{Probability density function.}
			\label{fig:CubicBezierpdf}
		\end{subfigure}
		\caption{Cumulative distribution and probability density functions for a Bezier distribution with control points $\mathbf{b}_0$, $\mathbf{b}_1$, $\mathbf{b}_2$, and $\mathbf{b}_3$.}
		\label{fig:CubicBeziercdfpdf}
	\end{figure}
	
	In fact, Definition~\ref{def:BezierDist2} includes all the CDFs that can be given by the parametric equations of a Bezier curve: Equations (\ref{eq:EBezierX}) and (\ref{eq:EBezierY}) guaranties that not only $\lim_{x \to -\infty} \mathrm{F}_X(x)=0$ and $\lim_{x \to \infty} \mathrm{F}_X(x)=1$, but also right-continuity of $\mathrm{F}_X$ (because Bezier curves are already continuous from point $\mathbf{b}_0$ to point $\mathbf{b}_n$). Condition (i) restricts the codomain of $\mathrm{F}_X$ to the interval $[0,1]$, as it is required by a CDF, and, at the same time, it includes distributions with $\lim_{h \to 0} \mathrm{F}_X\left(b_{0,1}+h\right) \neq 0$ and $\lim_{h \to 0} \mathrm{F}_X\left(b_{n,1}-h\right) \neq 1$. Finally, $\mathrm{F}_X$ has to be a non-decreasing function to be a CDF. Using the first derivative of a Bezier curve (from Equation (\ref{eq:dBezier})), it can be shown that $\mathrm{F}_X$ is a non-decreasing function if and only if condition (ii) is fulfilled (see Appendix~\ref{ap:cond2BezierDef}).
	
	\citet{bae2014nonparametric} propose the Bezier smoothing as a non-parametric technique to estimate a CDF. Therefore, all their estimated CDFs are Bezier distributions under Definition \ref{def:BezierDist2}.
		
	Since Definition \ref{def:BezierDist1} and \ref{def:BezierDist2} differ only in the discontinuities allowed by Equations (\ref{eq:EBezier}) and in conditions for Bezier control points, the pdf, moments, and generation of random values are practically the same as the ones presented by \citet{wagner1996using}. From now on, we will assume that the Bezier distribution family refers to Definition \ref{def:BezierDist2}.
	
	Given that the Bezier distribution inherits the affine invariance of Bezier curves, then we have the following property, which has some resemblance to the location-scale property of some well-known distributions.
	
	\begin{proposition}\label{prop:EBezierlinealescalar}
		If $X$ is a Bezier random variable given by the Bezier control points $\mathbf{b}_0, \mathbf{b}_1, \dots, \mathbf{b}_n \in \mathbb{R}^2$, and if $Z = u \, X + v$ where $u \in \mathbb{R}^+$ and $v \in \mathbb{R}$. Then $Z$ is a Bezier random variable given by the Bezier control points $\mathbf{b}_0^{*}, \mathbf{b}_1^{*}, \dots, \mathbf{b}_n^{*} \in \mathbb{R}^2$ where $\mathbf{b}_i^{*} = \left(u \, b_{i,1} + v, b_{i,2} \right)^\mathrm{T}$ for $i = 0, \dots, n$.
	\end{proposition}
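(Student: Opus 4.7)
The plan is to deduce the result from the affine invariance of Bezier curves, applied to the particular affine map $A\colon \mathbb{R}^{2}\to\mathbb{R}^{2}$ given by $A(x,y)=(ux+v,\,y)$, and then check that the list of conditions in Definition~\ref{def:BezierDist2} transfers cleanly to the image control points. The underlying probabilistic fact I will rely on is the standard change of variable $\mathrm{F}_{Z}(z)=\mathrm{F}_{X}\!\left((z-v)/u\right)$ valid for $u>0$.

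First I would write the CDF of $X$ in parametric form as in Equation~(\ref{eq:EBezier}), with $\mathrm{x}(t)=\mathrm{b}^{n}_{1}(t)$ and $\mathrm{y}_{\mathrm{F}}(t)=\mathrm{b}^{n}_{2}(t)$ on $[0,1]$, and then apply $A$ componentwise. Affine invariance of the Bernstein form tells us that applying $A$ to the curve $\mathbf{b}^{n}(t)$ yields a Bezier curve of the same degree whose control points are $A(\mathbf{b}_{i})=(u\,b_{i,1}+v,\,b_{i,2})^{T}=\mathbf{b}_{i}^{*}$. So the new parametric equations are $\mathrm{x}^{*}(t)=u\,\mathrm{x}(t)+v$ and $\mathrm{y}_{\mathrm{F}}^{*}(t)=\mathrm{y}_{\mathrm{F}}(t)$. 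Next I would check that these equations actually parametrize $\mathrm{F}_{Z}$: since $u>0$, $\mathrm{x}^{*}$ is monotone with the same monotonicity as $\mathrm{x}$, hence $(\mathrm{x}^{*})^{-1}(z)=\mathrm{x}^{-1}((z-v)/u)$, and therefore
\[
\mathrm{y}_{\mathrm{F}}^{*}\!\left((\mathrm{x}^{*})^{-1}(z)\right)=\mathrm{y}_{\mathrm{F}}\!\left(\mathrm{x}^{-1}\!\left((z-v)/u\right)\right)=\mathrm{F}_{X}\!\left((z-v)/u\right)=\mathrm{F}_{Z}(z),
\]
which is what we need. For the linearly extrapolated pieces in (\ref{eq:EBezierX}) for $t\notin[0,1]$, note that $A$ sends the line through $\mathbf{b}_{0}$ and $\mathbf{b}_{n}$ to the line through $\mathbf{b}_{0}^{*}$ and $\mathbf{b}_{n}^{*}$, so the extrapolation formula still has the required form; Equation~(\ref{eq:EBezierY}) is untouched because second coordinates are preserved.

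It then remains to verify that $\mathbf{b}_{0}^{*},\dots,\mathbf{b}_{n}^{*}$ satisfy conditions (i) and (ii) of Definition~\ref{def:BezierDist2}. Condition (i) is immediate: $b_{0,2}^{*}=b_{0,2}\geq 0$ and $b_{n,2}^{*}=b_{n,2}\leq 1$. For condition (ii), the second-coordinate sum is unchanged, and the first-coordinate sum transforms as
\[
\sum_{i=0}^{n-1}\bigl(b_{i+1,1}^{*}-b_{i,1}^{*}\bigr)\mathrm{B}_{i}^{n-1}(t)=u\sum_{i=0}^{n-1}\bigl(b_{i+1,1}-b_{i,1}\bigr)\mathrm{B}_{i}^{n-1}(t)\geq 0,
\]
since $u>0$ and the original sum is nonnegative by hypothesis. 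Putting these pieces together, $Z$ is a Bezier random variable with control points $\mathbf{b}_{0}^{*},\dots,\mathbf{b}_{n}^{*}$.

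Honestly, no step in this proof is substantively hard; the closest thing to an obstacle is simply bookkeeping the piecewise definition~(\ref{eq:EBezier}) so that the extrapolation outside $[0,1]$ and the second-coordinate jumps at $t=0,1$ transform correctly. Both issues dissolve once one observes that $A$ only rescales and shifts the first coordinate, leaving the vertical structure (where the discontinuities and the codomain bounds live) untouched.
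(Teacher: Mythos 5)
Your proof is correct and follows essentially the same route as the paper's: both rest on the affine invariance of Bezier curves applied to the map $(x,y)\mapsto(ux+v,\,y)$, so that transforming the curve is the same as transforming the control points. Your version is simply more explicit than the paper's two-sentence argument, in that you also verify that the transformed parametric equations recover $\mathrm{F}_Z$ and that conditions (i) and (ii) of Definition~\ref{def:BezierDist2} still hold — details the paper leaves implicit.
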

	
	\begin{proof}
		$Z = u \, X + v$ is an affine map, as a scaling and translation of $X$. That affine map will be applied to the Bezier curve associated to the CDF of $X$. Also, affine invariance of Bezier curves means that applying the affine map to a Bezier curve or to its control points leads us to the same result. Then, the scaling and translation of $X$ is equivalent to the scaling and translation of the Bezier control points related to the CDF of $X$.
	\end{proof}
	
	On the other hand, \citet{wagner1996using} propose to numerically compute the moments of a Bezier random variable. To that end, they use Gaussian quadrature and the following result for a nonnegative random variable $X$,
	\[ E\left[ X^r \right] = \int_{0}^{1} r \left(\mathrm{x}(t)\right)^{r-1} \left(1-\mathrm{y}_\mathrm{F}(t)\right) \left|\mathrm{x}'(t)\right| \, dt \]
	
	Nevertheless, we obtained some expressions for the raw and central moments of $X$. In Appendix \ref{ap:rawMBezier}, we prove that the $r$-th raw moment of a Bezier random variable $X$ is given by, 
	\begin{equation} \label{eq:BezierRawMoment}
	E\left[ X^r \right] = \mu_{\;X}^{*(r)} = \frac{r!}{r+1} \sum_{k_0+\dots+k_n=r} \sum_{j=0}^{n-1} \frac{\left( \prod_{i=0}^n \binom{n}{i}^{k_i} b_{i,1}^{k_i} \right) \binom{n-1}{j} \left( b_{j+1,2} - b_{j,2} \right) }{k_0! \dots k_n! \, \binom{(r+1)n-1}{j+\sum_{i=0}^n ik_i}}.
	\end{equation}
	From Equation (\ref{eq:BezierRawMoment}) and Proposition~\ref{prop:EBezierlinealescalar}, we get that the $r$-th central moment of $X$ is given by,
	\begin{equation} \label{eq:BezierCentralMoment}
	E\left[ \left(X - \mu_X\right)^r \right] = \mu_{X}^{(r)} = \frac{r!}{r+1} \sum_{k_0+\dots+k_n=r} \sum_{j=0}^{n-1} \frac{\left( \prod_{i=0}^n \binom{n}{i}^{k_i} \left(b_{i,1} - \mu_X\right)^{k_i} \right) \binom{n-1}{j} \left( b_{j+1,2} - b_{j,2} \right)}{k_0! \dots k_n! \, \binom{(r+1)n-1}{j+\sum_{i=0}^n i \, k_i}}.
	\end{equation}
	
	\section{Cumulative distribution and probability density functions} \label{sec:cdf_pdf}
	
	The BMT is a parametric family of continuous probability distributions supported on the interval $[0,1]$ and characterized by only two parameters, that we denoted $\kappa_l$ and $\kappa_r$, both of them on the interval $(0,1)$. These shape parameters control the curvature of each tail, $\kappa_l$ for the left tail and $\kappa_r$ for the right one.
	
	This distribution family was obtained after set some specifications for the Bezier distribution with four control points. Thus, it benefits of Bezier curves and Bezier distribution properties. Its Bezier control points are $\mathbf{b}_0=\left(0,0\right)^\mathrm{T}, \mathbf{b}_1=\left(\kappa_l,0\right)^\mathrm{T}, \mathbf{b}_2=\left(1-\kappa_r,1\right)^\mathrm{T}$, and $\mathbf{b}_3=\left(1,1\right)^\mathrm{T}$. Hence, the BMT distribution is supported on $[0,1]$ because $b_{0,1} = 0$ and $b_{3,1} = 1$. All BMT CDFs are continuous on $\mathbb{R}$ because $b_{0,2} = 0$ and $b_{3,2} = 1$. And, since $b_{1,2} = b_{0,2} = 0$ and $b_{2,2} = b_{3,2} = 1$, all BMT PDFs are continuous at $0$ and $1$.
	
	From Definition \ref{def:BezierDist2} and the mentioned control points, $\mathbf{b}_0$ to $\mathbf{b}_3$, we have that the CDF of a BMT random variable $X$ is given parametrically by,
				\begin{align*}
				\mathrm{x}(t) &= \left(b_{0,1} \mathrm{B}_0^3(t) + b_{1,1} \mathrm{B}_1^3(t) + b_{2,1} \mathrm{B}_2^3(t) + b_{3,1} \mathrm{B}_3^3(t)\right) I_{[0,1]}(t) + \left(\left(b_{3,1} - b_{0,1}\right)t + b_{0,1}\right) I_{(-\infty,0)\cup(1,\infty)}(t), \\
				&= \left((0) (1-t)^3 + (\kappa_l) 3 t (1-t)^2 + (1-\kappa_r) 3 t^2 (1-t) + (1) t^3\right) I_{[0,1]}(t) + t \, I_{(-\infty,0)\cup(1,\infty)}(t), \\
				\mathrm{y}_\mathrm{F}(t) &= \left( b_{0,2} \mathrm{B}_0^3(t) + b_{1,2} \mathrm{B}_1^3(t) + b_{2,2} \mathrm{B}_2^3(t) + b_{3,2} \mathrm{B}_3^3(t) \right) \mathrm{I}_{[0,1)} ( t ) + \mathrm{I}_{[1,\infty)} ( t ), \\
				&= \left( (0) (1-t)^3 + (0) 3 t (1-t)^2 + (1) 3 t^2 (1-t) + (1) t^3 \right) \mathrm{I}_{[0,1)} ( t ) + \mathrm{I}_{[1,\infty)} ( t ),
				\end{align*} 
	and rewriting those polynomials with respect to $t$, we present the following definition.
	
	\begin{definition}\emph{(BMT distribution)}. \label{def:BMT}
		A random variable $X$ is said to be BMT distributed, denoted by $BMT \left( \kappa_l, \kappa_r \right)$, if its CDF is given by the following parametric equations,
		\begin{subequations} \label{eq:BMT}
			\begin{equation} \label{eq:BMTX}
			\mathrm{x}(t) = \left(\left( 3\kappa_{l} + 3\kappa_r - 2\right) t^{3} + \left( - 6\kappa_{l} - 3\kappa_r + 3 \right) t^{2} + (3\kappa _{l}) t\right) I_{[0,1]}(t) + t \, I_{(-\infty,0)\cup(1,\infty)}(t),
			\end{equation}
			\begin{equation} \label{eq:BMTY}
			\mathrm{y}_\mathrm{F}(t) = \mathrm{F}_X \left(\mathrm{x}(t)\right) = \left( - 2 t^{3} + 3 t^{2} \right) I_{[0,1)}(\mathrm{x}(t)) + I_{[1, \infty)}(\mathrm{x}(t)) ,
			\end{equation}
		\end{subequations}
		for $\kappa_l, \kappa_r \in (0,1)$ and $t \in \mathbb{R}$.
	\end{definition}
	
	If $X$ is a BMT random variable with CDF $\mathrm{F}_X$ given by parametric equations (\ref{eq:BMTX}) and (\ref{eq:BMTY}), the correspondent PDF $\mathrm{f}_X$ is given parametrically by,
	\begin{subequations} \label{eq:BMTpdf}
		\begin{equation} \label{eq:BMTXpdf} 
		\mathrm{x}(t) = \left(\left( 3\kappa_{l} + 3\kappa_r - 2\right) t^{3} + \left( - 6\kappa_{l} - 3\kappa_r + 3 \right) t^{2} + (3\kappa _{l}) t\right) I_{[0,1]}(t) + t \, I_{(-\infty,0)\cup(1,\infty)}(t),
		\end{equation}
		\begin{equation} \label{eq:BMTYpdf}
		\mathrm{y}_\mathrm{f}(t) = \mathrm{f}_X\left(\mathrm{x}(t)\right) = \left( \frac{2t(1-t)}
		{ \left( 3\kappa _{l}+3\kappa_r-2\right) t^{2} + \left( -4\kappa _{l}-2\kappa_r+2\right) t + \kappa _{l} } \right) I_{(0,1)}(t) 
		\end{equation}
	\end{subequations}
	for $\kappa_l, \kappa_r \in (0,1)$ and $t \in \mathbb{R}$.
	
	Figure \ref{fig:BMTcdfspdfs} illustrates BMT CDFs and PDFs for different values of $\kappa_l$ and $\kappa_r$, including limiting cases in which those parameters tend to $0$ or $1$. Each cell of the shape plot in Figure \ref{fig:BMTcdfs} represents the square $[0,1] \times [0,1]$, whereas each cell of the shape plot in Figure \ref{fig:BMTpdfs} represents the square $[0,1] \times [0,6]$. 
	
	\begin{figure}[t]
		\centering
		\begin{subfigure}{0.475\textwidth}
			\includegraphics[width=\textwidth]{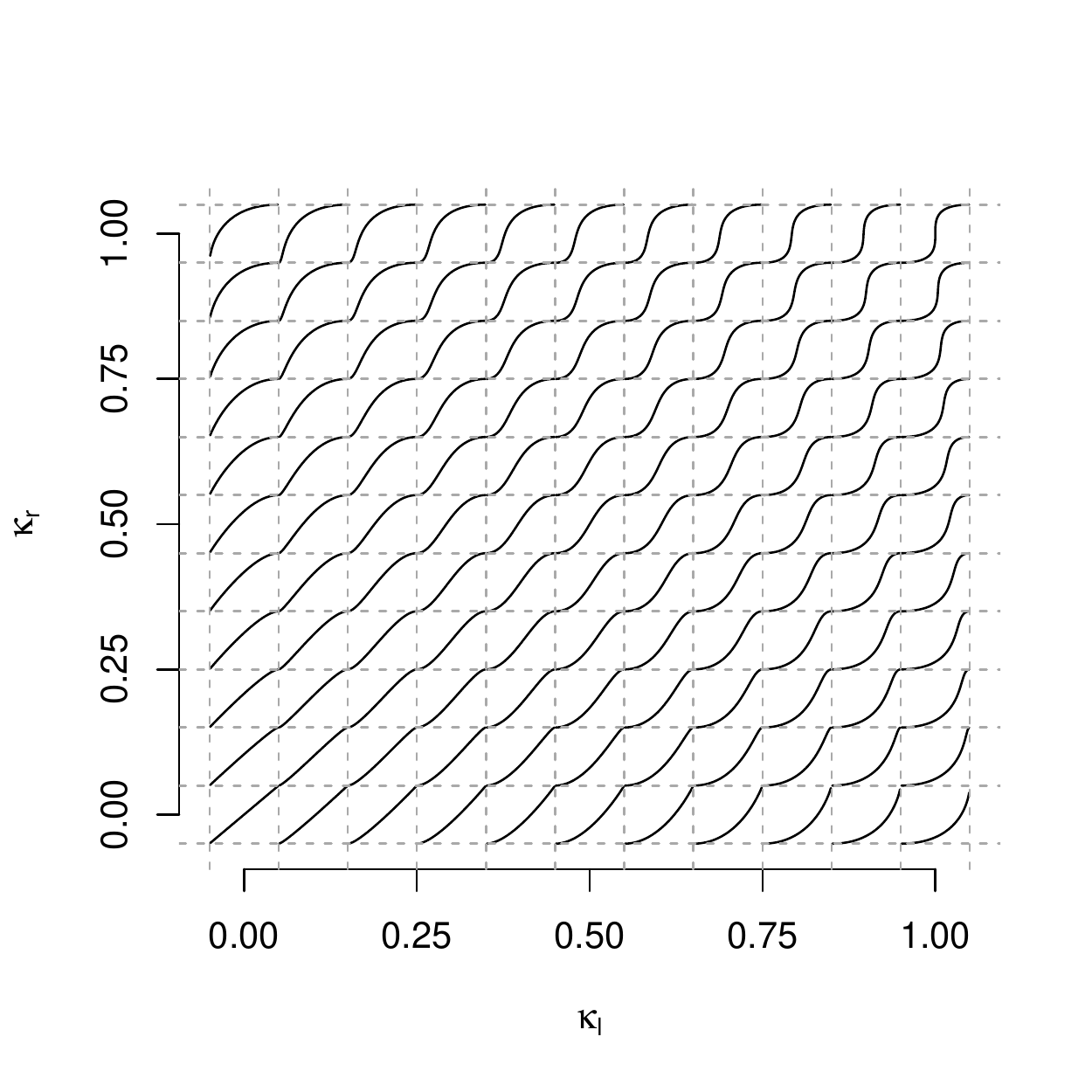}
			\caption{BMT cumulative distribution functions.}
			\label{fig:BMTcdfs}
		\end{subfigure}
		\begin{subfigure}{0.475\textwidth}
			\includegraphics[width=\textwidth]{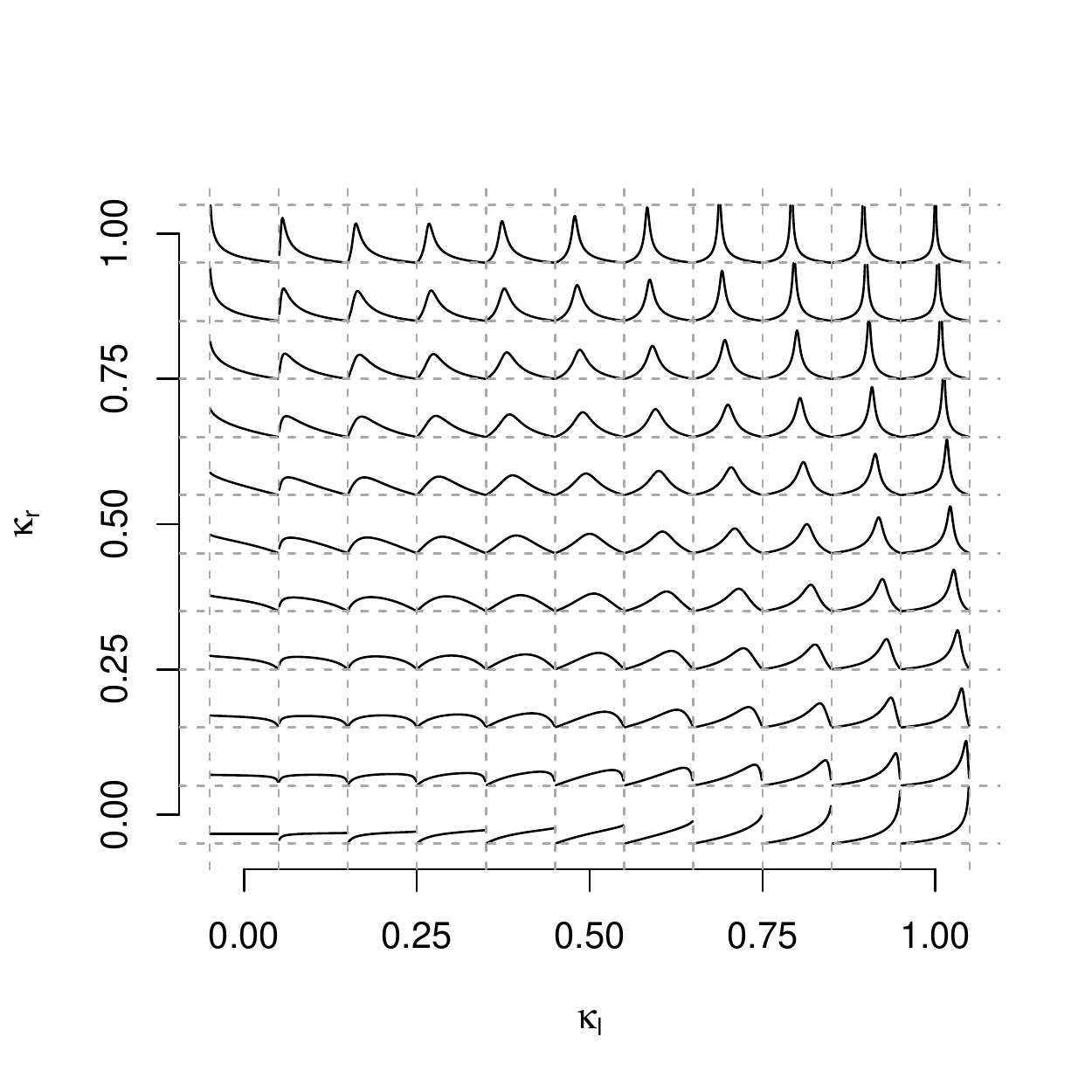}
			\caption{BMT probability density functions.}
			\label{fig:BMTpdfs}
		\end{subfigure}
		\caption{Shape plot for the BMT cumulative distribution and probability density functions.}
		\label{fig:BMTcdfspdfs}
	\end{figure}
	
	When $(\kappa_l,\kappa_r) \to (0,0)$, both tails become flat and we obtain the uniform continuous distribution. With regard to other limiting cases, we consider important to mention that:
	\begin{itemize}
		\item $\lim\limits_{t \to 0.5} \left( \lim\limits_{(\kappa_l,\kappa_r) \to (1,1)} \frac{2t(1-t)}{\left(3\kappa _{l}+3\kappa_r-2\right) t^{2} + \left( -4\kappa _{l}-2\kappa_r+2\right) t + \kappa _{l}} \right) = \lim\limits_{t \to 0.5} \frac{2t(1-t)}{4 t^2 - 4 t + 1} = \infty$,
		\item $\lim\limits_{t \to 0} \left(\lim\limits_{(\kappa_l,\kappa_r) \to (0,1)} \frac{2t(1-t)}{\left(3\kappa _{l}+3\kappa_r-2\right) t^{2} + \left( -4\kappa _{l}-2\kappa_r+2\right) t + \kappa _{l}} \right) = \lim\limits_{t \to 0} \frac{2t(1-t)}{t^2} = \infty$, and,
		\item $\lim\limits_{t \to 1} \left(\lim\limits_{(\kappa_l,\kappa_r) \to (1,0)} \frac{2t(1-t)}{\left(3\kappa _{l}+3\kappa_r-2\right) t^{2} + \left( -4\kappa _{l}-2\kappa_r+2\right) t + \kappa _{l}}\right) = \lim\limits_{t \to 1} \frac{2t(1-t)}{t^2 - 2 t + 1} = \infty$.
	\end{itemize}
	
	Moreover, since the tangent line slope of a BMT PDF is given by, 
	\[\frac{\mathrm{y}_\mathrm{f}'(t)}{\mathrm{x}'(t)} = \frac{2}{3}\frac{-\left( \kappa _{r}-\kappa _{l}\right) t^{2}-2\kappa _{l}t+\kappa _{l}}{\left( \left( 3\kappa _{l}+3\kappa _{r}-2\right) t^{2}+\left( -4\kappa _{l}-2\kappa _{r}+2\right) t+\kappa _{l}\right) ^{3}},\] 
	the BMT distribution always has one mode, which is given by,
	\[ Mode[X] = 
	\begin{cases}
		\mathrm{x}\left(0.5\right)=0.5 & \text{if } \kappa_l = \kappa_r \text{ (symmetric case)} \\
		\mathrm{x}\left(\frac{\sqrt{\kappa_{l}\kappa_r}-\kappa_{l}}{\kappa_r-\kappa_l}\right) & \text{if } \kappa_l \neq \kappa_r  \text{ (skewed case)}
	\end{cases} \]
	
	\section{Quantile function and simulation} \label{sec:quantile}
	
	The quantile function of a random variable $X$, with CDF given by $\mathrm{F}_X(x) = \mathrm{y}_\mathrm{F} \left( \mathrm{x}^{-1}(x) \right)$, is $\mathrm{F}_X^{-1}(p) = \mathrm{x} \left( \mathrm{y}_\mathrm{F}^{-1}(p) \right)$, where $p \in [0,1]$. To establish $\mathrm{y}_\mathrm{F}^{-1}$ for the BMT distribution, we need to find $t \in [0,1]$ such that,
	\begin{equation} \label{eq:BMTYroot}
	\mathrm{y}_\mathrm{F}(t) = - 2 t^{3} + 3 t^{2} = p
	\end{equation}
	
	The solution to Equation (\ref{eq:BMTYroot}) can be computed by any root-finding algorithm. However, an efficient and accurate way to get real roots of a cubic polynomial is using Francois Viete's equations \cite[Section 5.6]{press2007numerical}. Hence, for $p=0$, the root is $t=0$; for $p=1$, the root is $t=1$; and for $p \in (0,1)$, the only real root on the interval $(0,1)$ is given by,
	\begin{equation} \label{eq:BMTYinverse}
	\mathrm{y}_\mathrm{F}^{-1}(p) = \frac{1}{2}-\cos \left( \frac{\arccos \left( 2p-1\right)-2\pi}{3} \right).
	\end{equation}
	
	Therefore, the quantile function $\mathrm{F}_X^{-1}$ has a close-form expression. As a result, the median of $X$ is,
	\begin{equation}
	Median[X] = \mathrm{x} \left( \mathrm{y}_\mathrm{F}^{-1}(0.5) \right) = \frac{1}{2} - \frac{3}{8} (\kappa_r - \kappa_l),
	\end{equation}
	and the interquartile range of $X$ is,
	\begin{equation}
	IQR[X] = \mathrm{x} \left( \mathrm{y}_\mathrm{F}^{-1}(0.75) \right) - \mathrm{x} \left( \mathrm{y}_\mathrm{F}^{-1}(0.25) \right) = \frac{1}{2} - 3\left(\frac{1}{4} - \cos \frac{4}{9}\pi\right) \left(\kappa_r + \kappa_l\right).
	\end{equation}
	Also, the method of inversion can be used straightforward for sampling or simulation.
	
	\section{Moments} \label{sec:moments}
	
	Substituting Bezier control points $\mathbf{b}_0, \mathbf{b}_1, \mathbf{b}_2$, and $\mathbf{b}_3$ of a BMT distribution in the $r$-th raw moment of a Bezier distribution (Equation (\ref{eq:BezierRawMoment})), we have that the $r$-th raw moment of a BMT random variable $X$ is,
	\begin{equation} \label{eq:BMTRawMoment}
	E\left[ X^r \right] = \mu_{X}^{*(r)} = 2 \frac{r!}{r+1} \sum_{k_{1}+k_{2}+k_{3}=r} \frac{3^{k_{1}+k_{2}} \kappa _{l}^{k_{1}} \left( 1-\kappa_r\right) ^{k_{2}}}{k_{1}!k_{2}!k_{3}! \binom{3r+2}{1+k_{1}+2k_{2}+3k_{3}}}.
	\end{equation}
	Correspondingly, from the $r$-th central moment of a Bezier random variable (Equation (\ref{eq:BezierCentralMoment})), the $r$-th central moment of $X$ is,
	\begin{equation} \label{eq:BMTCentralMoment}
	E\left[ \left(X - \mu_X\right)^r \right] = \mu_{X}^{(r)} = 2 \frac{r!}{r+1} \sum_{k_{0}+k_{1}+k_{2}+k_{3}=r} \frac{3^{k_{1}+k_{2}} a_0^{k_{0}} a_1^{k_{1}} a_2^{k_{2}} a_3^{k_{3}}}{k_{0}!k_{1}!k_{2}!k_{3}! \binom{3r+2}{1+k_{1}+2k_{2}+3k_{3}}}, 
	\end{equation}
	where $a_0 = - \mu_X$, $a_1 = \kappa _{l} - \mu_X$, $a_2 = 1 - \kappa_r - \mu_X$, and $a_3 = 1 - \mu_X$. Therefore, mean, variance, Pearson's skewness, and Pearson's kurtosis can be derived from Equations (\ref{eq:BMTRawMoment}) and (\ref{eq:BMTCentralMoment}). We have that,
	\begin{equation}
	E\left[X\right] = \mu_{X}^{*(1)} = \frac{1}{2} - \frac{3}{10} (\kappa_r - \kappa_l),
	\end{equation}
	\begin{equation}
	Var\left[X\right] = \mu_{X}^{(2)} = \frac{1}{2100}\left( 36\kappa _{l}^{2}+36\kappa_r^{2}+18\kappa _{l}\kappa_r-120\kappa _{l}-120\kappa_r+175\right),
	\end{equation}
	\begin{equation}
	Skew\left[X\right] = \frac{\mu_{X}^{(3)}}{\left(\mu_{X}^{(2)}\right)^{3/2}} = \frac{27\sqrt{21}\left( \kappa_r-\kappa _{l}\right) \left( 13\kappa _{l}^{2}+13\kappa_r^{2}+4\kappa _{l}\kappa_r-65\kappa _{l}-65\kappa_r+150\right) }{11\left( 36\kappa _{l}^{2}+36\kappa_r^{2}+18\kappa _{l}\kappa_r-120\kappa _{l}-120\kappa_r+175\right) ^{3/2}},
	\end{equation}
	and,
	\begin{equation}
	Kurt\left[X\right] = \frac{\mu_{X}^{(4)}}{\left(\mu_{X}^{(2)}\right)^{2}},
	\end{equation}
	where,
	\begin{align*}
	\mu_{X}^{(4)} &= \frac{1}{10010000} \left( 6507\kappa _{l}^{4} + 6507\kappa_r^{4} + 432\kappa _{l}^{3}\kappa_r + 432\kappa _{l}\kappa_r^{3} + 13122\kappa _{l}^{2}\kappa_r^{2} \right. \\
	& \qquad{} \qquad{} \qquad{} - 43380\kappa _{l}^{3} - 43380\kappa_r^{3} - 28620\kappa _{l}^{2}\kappa_r - 28620\kappa _{l}\kappa_r^{2} + 29700\kappa _{l}\kappa_r \\
	& \qquad{} \qquad{} \qquad{} \left. + 135900\kappa _{l}^{2} + 135900\kappa_r^{2} - 150000\kappa _{l} - 150000\kappa_r + 125125\right).
	\end{align*}

	Appendix \ref{ap:descr} shows all the possible outcomes, for different values of $\kappa_l$ and $\kappa_r$, of the BMT descriptive measures obtained so far (mean, median, mode, variance, standard deviation, interquantile range, Person's skewness, and Pearson's kurtosis). 
	
	Regarding the usefulness of some moments, \citet{pearson1916philosophical} and \citet{cullen1999probabilistic}, among others, produce different planes to illustrate characteristics or scope of some distributions. Figure~\ref{fig:s2-k_diagram} presents a squared-skewness - kurtosis plane with the BMT and some common distributions represented on it. In the mentioned plane, the distribution of a random variable $X$ is represented by coordinates $(Skew\left[X\right]^2,Kurt\left[X\right])$. Since both coordinates are positive, only the first quadrant of the plane is needed. Also, distributional families could be represented by a point, a curve, or a region inside that plane. For example, all distributions belonging to the normal family have squared skewness equal to zero and kurtosis equal to three, regardless the values of its location and scale parameters. Then, normal distribution family is represented inside the plane by the point $(0,3)$. Another example could be the gamma distribution family with shape parameter $\alpha$ and rate parameter $\beta$. That family has coordinates $(4/\alpha,6/\alpha + 3)$, so the family is represented by all the points of the line $y=1.5x + 3$. The region for the BMT distribution in Figure~\ref{fig:s2-k_diagram} shows that this family has: symmetric shapes with kurtosis from $1.8$, equal to the continuous uniform distribution, to $6.78$, similar to the student's t distribution with $5.59$ degrees of freedom; shapes more skewed than the most asymmetrical skew-normal; and some shapes that the very flexible beta distribution cannot reach, given the BMT region above the line that represents the gamma family.
	
	\begin{figure}[t]
		\centering
		\includegraphics[width=0.6\textwidth]{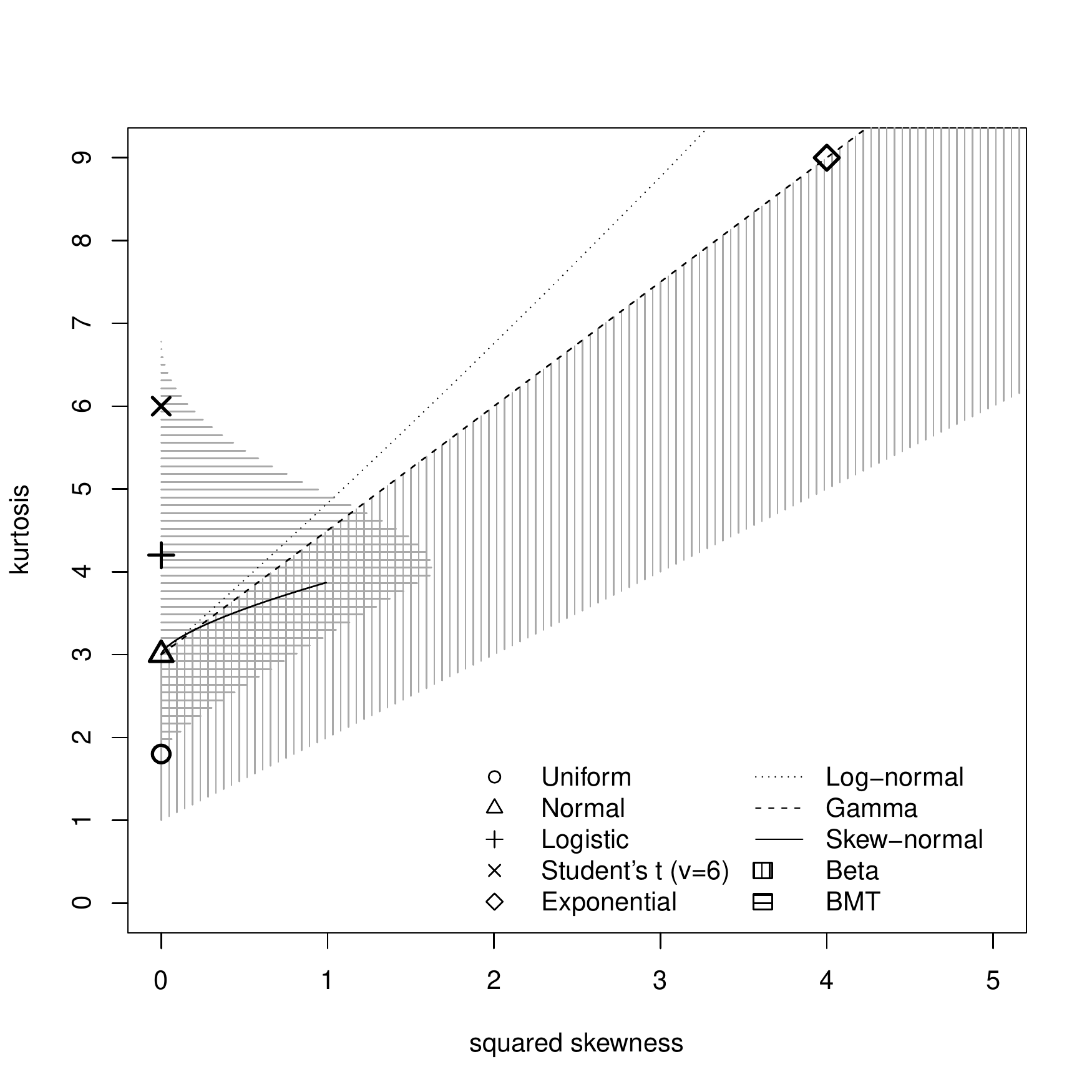}
		\caption{Squared skewness - kurtosis diagram.}
		\label{fig:s2-k_diagram}
	\end{figure}
	
	\section{Domain on $[c,d]$} \label{sec:BMT4}
	
	As any distribution with domain $[0,1]$, it is possible to alter the BMT distribution domain to $[c,d]$ by a linear transformation, introducing two further parameters $c, d \in \mathbb{R}$ ($c < d$). A random variable $Y$ is distributed BMT on $[c,d]$, denoted by $BMT\left(c,d,\kappa_l,\kappa_r\right)$, if and only if, $X = \frac{ Y - c }{ d - c } \sim BMT\left( \kappa_l,\kappa_r \right) \equiv BMT\left(0,1,\kappa_l,\kappa_r\right)$. By Proposition~\ref{prop:EBezierlinealescalar}, if $X \sim BMT\left( \kappa_l,\kappa_r \right)$, then for any $c < d$, the random variable $Y = (d-c)X + c$ belongs to the Bezier distribution family with control points $\mathbf{b}_0=\left(c,0\right)^\mathrm{T}, \mathbf{b}_1=\left((d-c)\kappa_l+c,0\right)^\mathrm{T}, \mathbf{b}_2=\left((d-c)(1-\kappa_r)+c,1\right)^\mathrm{T}, \mathbf{b}_3=\left(d,1\right)^\mathrm{T}$.
	
	If the CDF of $X$ is given by parametric equations $\mathrm{x}_X(t)$ and $\mathrm{y}_\mathrm{F_X}(t)$, the CDF of $Y$ is given by, 
	\[\mathrm{x}_Y(t) = (d-c)\mathrm{x}_X(t)+c \qquad\text{ and }\qquad \mathrm{y}_\mathrm{F_Y}(t) = \mathrm{y}_\mathrm{F_X}(t).\]
	In the same way, the pdf of $Y$ is given by, 
	\[\mathrm{x}_Y(t) = (d-c)\mathrm{x}_X(t)+c \qquad\text{ and }\qquad \mathrm{y}_\mathrm{f_Y}(t) = \frac{1}{d-c}\mathrm{y}_\mathrm{f_X}(t).\] 
	And, the closed-form expression for the quantile function of $Y$ is, 
	\[(d-c)\mathrm{x}_X\left( \mathrm{y}_\mathrm{F_X}^{-1}(p) \right) + c,\] 
	where $\mathrm{y}_\mathrm{F_X}^{-1}(0) = 0$, $\mathrm{y}_\mathrm{F_X}^{-1}(1) = 1$, and $\mathrm{y}_\mathrm{F_X}^{-1}(p) = \frac{1}{2}-\cos \left( \frac{\arccos \left( 2p-1\right)-2\pi}{3} \right)$ for $p \in (0,1)$.
	 
	The mean, median, and mode of $Y$ are those of $X$ scaled by $(d-c)$ and shifted by $c$; variance is scaled by $(d-c)^2$; interquartile range and standard deviation are scaled by $(d-c)$; and, since Pearson's skewness and kurtosis are standardized moments, they are the same for $Y$ and $X$.
	
	\section{Estimation} \label{sec:estimation}
	
	Let $\boldsymbol{\theta} = \left(\kappa_l,\kappa_r\right)^\mathrm{T}$ be the parameter vector and $\boldsymbol{\Theta} = (0,1) \times (0,1)$ the parameter space of a BMT distribution. Since the PDF of the BMT distribution does not have an explicit formula, a numerical approach is needed to obtain a maximum likelihood estimate (MLE) \citep{fisher1922mathematical},
	\[\hat{\boldsymbol{\theta}}_{mle} = \arg \max_{\boldsymbol{\theta} \in \boldsymbol{\Theta}} \left\{ \sum_{i=1}^{n} \ln \left( \mathrm{f} \left( x_{i}; \boldsymbol{\theta} \right) \right) \right\}. \]
	
	Nonetheless, a MLE might not exist and observations close to $0$, $1$, or $0.5$ and $\boldsymbol{\theta}$ in the vicinity of $(0,1)^\mathrm{T}$, $(1,0)^\mathrm{T}$, or $(1,1)^\mathrm{T}$ could give some trouble to the solving mechanism of the maximum likelihood optimization problem.
	
	Considering potential inconveniences with maximum likelihood estimation, we explore an alternative method. The maximum product of spacing estimate (MPSE) \citep{cheng1983estimating}, also called maximum spacing estimate \citep{ranneby1984maximum}, conserves some properties and surpasses some difficulties of the maximum likelihood estimation. 
	
	In general, for a density $\mathrm{f}_X(x;\boldsymbol{\theta})$ strictly positive in the interval $(x_{(0)},x_{(n+1)})$ and zero outside of it, and an ordered random sample $x_{(0)} < x_{(1)} < \dots < x_{(n)} < x_{(n+1)}$, where $x_{(0)}$ and $x_{(n+1)}$ could be known or unknown values, MPSEs are $\hat{\boldsymbol{\theta}}_{mpse} \in \boldsymbol{\Theta}$, such that they maximize the sum (or arithmetic mean) of the logarithm of spacings: $\mathrm{F}(x_{(i)};\boldsymbol{\theta}) - \mathrm{F}(x_{(i-1)};\boldsymbol{\theta})$, for $i = 1,\dots,n+1$,
	
	\[ \hat{\boldsymbol{\theta}}_{mpse} = \arg \max_{\boldsymbol{\theta} \in \boldsymbol{\Theta}} \left\{ \sum_{i=1}^{n+1} \ln \left( \mathrm{F} \left( x_{(i)}; \boldsymbol{\theta} \right) - \mathrm{F} \left( x_{(i-1)}; \boldsymbol{\theta} \right) \right) \right\}. \]
	
	\citet{cheng1983estimating} and \citet{ranneby1984maximum} propose the maximum product of spacing method by two separated ways, the first motivated on the probability integral transform and the second one on the Kullback-Leibler divergence. Both works show that MLEs and MPSEs are related and demonstrate that the maximum product of spacing method can achieve consistent, asymptotically normal, and asymptotically efficient estimators, when a MLE exists and under more general conditions.
	
	The only known downside about the maximum product of spacing method is when $x_{(i)} = x_{(i-1)}$. In that case, the respective spacing can be replaced by $\mathrm{f}(x_{(i)};\boldsymbol{\theta})$. But, if $x_{(i)} = x_{(0)}$ or $x_{(i)} = x_{(n+1)}$, the observation is standardly ignored or excluded, just like with the maximum likelihood method. 
	
	The objective function of the maximum product of spacing optimization problem is bounded, thus it always has at least a supreme. Since the CDF of a BMT distribution does not have a close-form formula, a MPSE will also have to be found numerically.
	
	To test estimation methods together with optimization algorithms, we run some simulations and check parameter recovery. We simulate $1000$ samples of size $30$, $300$, and $3000$, from a BMT distribution on $[0,1]$ with parameter vector $\boldsymbol{\theta}_1 = \left(0.5,0.5\right)^T$, $\boldsymbol{\theta}_2 = \left(0.2,0.4\right)^T$, and $\boldsymbol{\theta}_3 = \left(0.9,0.1\right)^T$. For each sample, we employ a trust-region approach to box-constrained optimization \citep{gay1984trust} for a numerical maximum likelihood and maximum product of spacing estimation. Function \verb|nlminb| of the software \verb|R| \citep{RCoreTeam2015language} was used and $(0.6,0.6)^T$ was always the initial value for $\boldsymbol{\theta}$. Following the estimation, we calculate the absolute difference between a parameter value and the obtained estimate for each simulated sample. By sample size, parameter vector, and estimation method, the mean, median, and standard deviation of the mentioned differences were computed for each set of samples (See Table \ref{tab:ParRecov} in Appendix \ref{ap:ParRecov}). Results indicate that we have successful numerical procedures for parameter estimation. Also, despite the difficulties associated to analytically solving the MLE optimization problem for the BMT distribution, a numerical solution does not have considerable inconveniences, at least for the arbitrary chosen parameter vectors along with the selected optimization method.
	
    \section{Applications} \label{sec:applications}
	
	In this section, we illustrate the usefulness and potential of the BMT distribution with the help of three real data sets.
	
	\subsection{PISA 2012}


	Programme for International Student Assessment (PISA) of the Organisation for Economic Co-operation and Development (OECD) aims to evaluate educational systems. Every three years since 2000, PISA has been designed, applied, and studied surveys about literacy of 15-year-old school students. The questionnaires mainly evaluate performance in mathematics, science, and reading. Those tests have multiple-choice and open-ended questions setting up in real life situations, independent of schools curriculum as much as possible. The PISA 2012 assessment evaluated around $510 000$ students of $65$ countries or economies, representing approximately $28$ million individuals worldwide.
	
	In this first application, we use the answers to the PISA 2012 questionnaire 
	\citep{oecd2012database}. First, we take the ``Scored cognitive item response data file.''. We keep all the questions with binary response (correct and incorrect) and exclude those that could be scored with partial credit. Then, we recode the responses: $1$ for correct and $0$ for incorrect. Finally, we obtain the percentage of correct answers, i.e., the classic performance score of each student. It is important to mention that the reported scores of PISA 2012 are estimated and scaled using the Rasch model of item response theory. On the other hand, all booklets for the test could have different: number of questions, traits evaluated, and participating countries. Also, the assignation of a booklet to a student is randomized. Considering that, we choose only one arbitrary booklet, Booklet 10, and its questions of mathematics. In conclusion, the variable to be fitted or modeled by the BMT distribution is precisely the classic performance score in mathematics, using the students responses to Booklet 10 of the PISA test applied in 2012.
	
	Table~\ref{tab:PISA_descr} displays some summary statistics of the sample. The Booklet 10 was given to $35545$ students, $7.32\%$ of the evaluated people that year. The performance as a percentage naturally goes from $0\%$ to $100\%$, and, indeed, there is no reason that would impede answering correctly or wrongly all the dichotomous math questions from Booklet 10. Sample skewness and kurtosis allow us to locate the data in Figure~\ref{fig:s2-k_diagram}, slightly to the right from axis $y$ and just between the points representing normal and uniform distributions. 
	
	\renewcommand\baselinestretch{1}
	\begin{table}[t]
		\centering
		\caption{Descriptive statistics of PISA 2012 data.}
		\begin{tabular}{cccccccc} \hline
			n & min & max & median & mean & sd & skewness & kurtosis \\ \hline
			$35545$ & $0.00$ & $1.00$ & $0.41$ & $0.44$ & $0.21$ & $0.27$ & $2.23$ \\ \hline
		\end{tabular}
		\label{tab:PISA_descr} \bigskip \bigskip
		
		\caption{MLEs and MPSEs of the distribution shape parameters for PISA 2012 data, and objective functions evaluated at $\hat{\boldsymbol{\theta}}$.}
		\begin{tabular}{llccc} \hline
			 & Method & $\hat{\boldsymbol{\theta}}$ & logLik & Sum log spacings \\ \hline
			Beta & MLE & $\left(1.9764, 2.4744\right)$ & $6040.0500$ & $5925.7131$ \\
			 $\boldsymbol{\theta} = (\alpha,\beta)$ & MPSE & $\left(1.9759, 2.4737\right)$ & $6040.0493$ & $5925.7138$ \\  \hline
			Kumaraswamy & MLE & $\left(1.7532, 2.5818\right)$ & $5913.3006$ & $5799.0648$ \\ 
			 $\boldsymbol{\theta} = (a,b)$ & MPSE & $\left(1.7529, 2.5810\right)$ & $5913.2999$ & $5799.0655$ \\  \hline
			BMT & MLE & $\left(0.2852, 0.4871\right)$ & $6138.6886^*$ & $6024.7309$ \\
			 $\boldsymbol{\theta} = (\kappa_l,\kappa_r)$ & MPSE & $\left(0.2852, 0.4871\right)$ & $6138.6885$ & $6024.7310^*$ \\ \hline
			\multicolumn{5}{l}{$^*$Highest value for the objective function.}
		\end{tabular}
		\label{tab:PISA_Es}
	\end{table}
	
	The above suggests that it is appropriate to fit the data with distributions like the beta, Kumaraswamy, or BMT (on $[0,1]$). By maximum likelihood and maximum product of spacing, we obtain estimates of the shape parameters for each of those distributions. Table~\ref{tab:PISA_Es} shows attained MLEs and MPSEs for the beta, Kumaraswamy, and BMT distributions, and, in each case, objective functions associated to both estimation methods were evaluated at the attained estimate $\hat{\boldsymbol{\theta}}$. For MLEs, the objective function is the natural logarithm of the likelihood, and, for MPSEs, the objective function is the sum of the natural logarithm of spacings.
	
	From Table~\ref{tab:PISA_Es}, we can see that maximum likelihood and maximum product of spacing have almost the same results. The reason of this might be a large enough sample size and/or many equal observations. In addition, BMT achieves the highest values for the respective objective functions among the selected distributions. Since all the models have the same number of parameters, Akaike information criterion (AIC) and Bayesian information criterion (BIC) will also indicate that the BMT provides to some extent a better fit than the other two distributions. With regard to the values of $\hat{\boldsymbol{\theta}}$ for the BMT distribution, we can say that we establish an estimated BMT curvature degree of $28.52\%$ for the left tail $(\kappa_l)$ and of $48.71\%$ for the right tail $(\kappa_r)$. Right tail is steeper than the left one and that implies a right-skewed estimated distribution, with an asymmetry of $20.19$ BMT percentage points if we use $\kappa_r - \kappa_l$ as an asymmetry indicator.
			
	\subsection{Food Expenditure}
			
	The data of our second application correspond to the proportion of income spent on food, used for a beta regression model application \citep{ferrari2004beta}. The source of this data \cite[Table 15.4]{griffiths1993learning} has the income, food expenditure, and number of people in a sample of 38 households from a large U.S. city. \citet{ferrari2004beta} use the mentioned proportion as response of their proposed regression.
			
	Food expenditure, as a proportion of the income, theoretically goes from $0$ to $1$. We consider summary statistics of the variable (Table~\ref{tab:Food_descr}). According to skewness close to one and kurtosis slightly above four, the data gets inside beta and BMT regions in Figure~\ref{fig:s2-k_diagram}. Then, it is reasonable to follow the same procedure as with the previous application. Table~\ref{tab:Food_Es2} shows that the BMT distribution has the highest sum of log spacings, and is the only one with similar estimates for both estimation methods. On the other hand, the beta distribution has the highest log likelihood, and, as a result, the lowest AIC and BIC among the selected distributions.
			
	\begin{table}[t]
		\centering
		\caption{Descriptive statistics of food expenditure proportion.}
		\begin{tabular}{cccccccc} \hline
			n & min & max & median & mean & sd & skewness & kurtosis \\ \hline
			$38$ & $0.11$ & $0.56$ & $0.26$ & $0.29$ & $0.10$ & $0.98$ & $4.16$ \\ \hline
		\end{tabular}
		\label{tab:Food_descr} \bigskip \bigskip
		
		\caption{MLEs and MPSEs of the distribution shape parameters for food expenditure proportion, and objective functions evaluated at $\hat{\boldsymbol{\theta}}$.}
		\begin{tabular}{llccc} \hline
			& Method & $\hat{\boldsymbol{\theta}}$ & logLik & Sum log spacings \\ \hline
			Beta & MLE & $\left(6.0716, 14.8221\right)$ & $35.3464^*$ & $-162.3005$ \\ 
			$\boldsymbol{\theta} = (\alpha,\beta)$ & MPSE & $\left(5.0982, 12.3509\right)$ & $35.0435$ & $-161.9838$ \\  \hline
			Kumaraswamy & MLE & $\left(2.9546, 26.9654\right)$ & $33.4891$ & $-163.6891$ \\ 
			$\boldsymbol{\theta} = (a,b)$ & MPSE & $\left(2.7211, 20.1626\right)$ & $33.2275$ & $-163.4152$ \\  \hline
			BMT & MLE & $\left(0.4304, 1.0000\right)$ & $33.2552$ & $-161.8100$ \\
			$\boldsymbol{\theta} = (\kappa_l,\kappa_r)$ & MPSE & $\left(0.4281, 1.0000\right)$ & $33.2523$ & $-161.8071^*$ \\ \hline
			\multicolumn{5}{l}{$^*$Highest value for the objective function.}
		\end{tabular}
		\label{tab:Food_Es2} \bigskip \bigskip
		
		\caption{MLEs and MPSEs of the distribution domain and shape parameters for food expenditure proportion, and objective functions evaluated at $\hat{\boldsymbol{\theta}}$.}
		\begin{tabular}{llccc} \hline
			& Method & $\hat{\boldsymbol{\theta}}$ & logLik & Sum log spacings \\ \hline
			Beta & MLE & $\left(0.04, 0.80 \times 10^6, 6.31, 20.37 \times 10^6\right)$ & $36.2398$ & $-161.5148$ \\
			$\boldsymbol{\theta} = (c,d,\alpha,\beta)$ & MPSE & $\left(0.01, 1.05 \times 10^6, 7.14, 26.09 \times 10^6\right)$ & $35.8634$ & $-161.1030$ \\  \hline
			Kumaraswamy & MLE & $\left(0.09,    157.27,      2.06, 0.73 \times 10^6\right)$ & $35.6851$ & $-162.6788$ \\
			$\boldsymbol{\theta} = (c,d,a,b)$ & MPSE & $\left(0.07,      646.33,        2.09, 13.87 \times 10^6\right)$ & $35.1539$ & $-162.0405$ \\ 	 \hline
			BMT & MLE & $\left(0.08, 0.65, 0.43, 0.86\right)$ & $37.1966^*$ & $-160.7914$ \\
			$\boldsymbol{\theta} = (c,d,\kappa_l,\kappa_r)$ & MPSE & $\left(0.03, 0.73, 0.51, 0.94\right)$ & $36.4574$ & $-159.8543^*$ \\ \hline
			\multicolumn{5}{l}{$^*$Highest value for the objective function.}
		\end{tabular}
		\label{tab:Food_Es4}
	\end{table}
						
	To be more precise, food expenditure proportion of zero or one does not seem to have sense in practice. For every household, some of the income should go to food and also to something else than food. Indeed, sample minimum and maximum say that food expenditure percentage goes from $11\%$ to $56\%$ for the 38 households. Therefore, we believe that it is more suitable to use a distribution on $[c,d]$ than on $[0,1]$. In addition, estimates for the population minimum and maximum can be considered of special interest.
						
	We obtain Table~\ref{tab:Food_Es4}, extending the same worked distributions and methods to the inclusion of parameters $c$ and $d$. Between those extended distributions, the BMT achieves the highest values for the objective function of both estimation methods and presents reasonable estimates for minimum and maximum population proportions of income spent on food. On the other hand, estimated $d$ for beta and Kumaraswamy distributions are not valid proportions. Table~\ref{tab:Food_Es4} also gives us hints about differences between maximum likelihood and maximum product of spacing for distributions with four (two domain and two shape) parameters and a small sample.
				
	The plots of the fitted densities by maximum likelihood for the beta with two parameters, the beta with four parameters, and the BMT with four parameters are shown in Figure~\ref{fig:Food}. They illustrate that the BMT of four parameters provides a better fit than the other two distributions. The beta on $[0,1]$ does no achieve the observed steepness in the histogram, and, although the beta on $[c,d]$ is a little more steeper, it does not have a domain within $[0,1]$. Actually, the beta and Kuramaswamy distributions can have tails tightly attached to the x-axis, and therefore, estimates of domain parameters can go very far from minimum and maximum of the sample to achieve a better fit. From the fitted BMT on $[c,d]$, it is estimated that the population proportions of income spent on food go from $7.66\%$ to $64.93\%$, with a BMT curvature degree of $43.02\%$ for the left tail and $86.37\%$ for the right one. BMT curvature degree difference between tails imply a right-skewed distribution with an asymmetry of $\kappa_r - \kappa_l = 43.35$ BMT percentage points.

	\begin{figure}[t]
	\centering
	\includegraphics[width=0.9\textwidth]{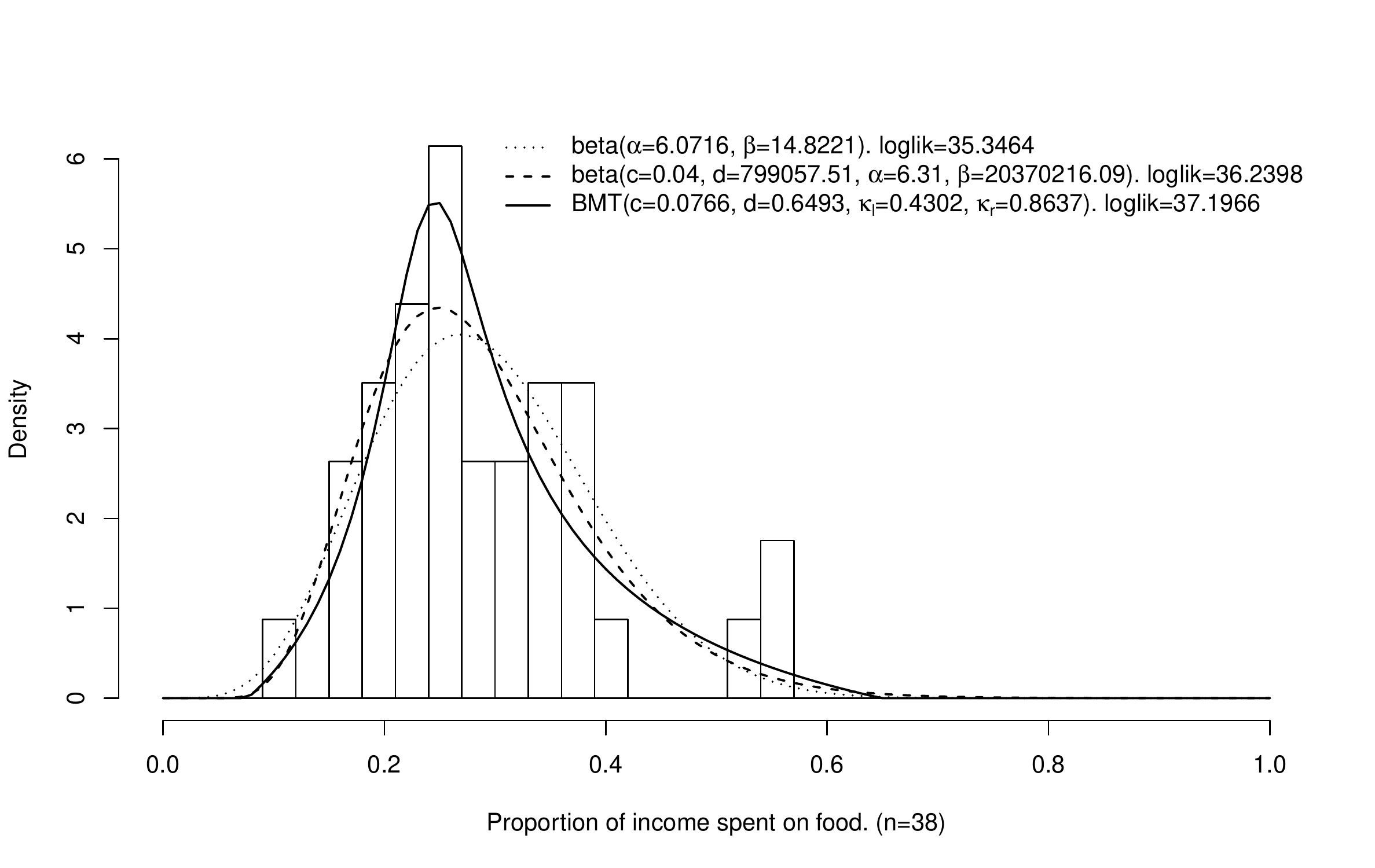}
	\caption{Maximum-likelihood fitted beta on $[0,1]$, beta on $[c,d]$, and BMT on $[c,d]$ densities for the proportion of income spent on food, in a sample of 38 households from a large U.S. city.\cite[Table 15.4]{griffiths1993learning}}
	\label{fig:Food}
    \end{figure}

	\subsection{Height of sons}
						
	For our third application, we wanted to explore the effectiveness of the BMT as a distribution supported on a bounded interval completely outside $[0,1]$. 
						
	We take the famous dataset on relationship between heights of fathers and their sons by \citet{pearson1903laws}. This data set has $1078$ observations and two variables: father's and son's height. The original data were reported to the nearest inch. Later, a small amount of random uniform noise was added to render it continuous \cite[dataframe father.son]{verzani2015usingr}, and for this example, we convert it to centimeters. With this particular application, we want to focus on estimating the stature of tallest and shortest son of the population from which the sample was taken.
						
	\begin{table}[t]
		\centering
		\caption{Descriptive statistics of son's adult height.}
		\begin{tabular}{cccccccc} \hline
			n & min & max & median & mean & sd & skewness & kurtosis \\ \hline
			$1078$ & $148.61$ & $199.05$ & $174.28$ & $174.46$ & $7.15$ & $-0.04$ & $3.54$ \\ \hline
		\end{tabular}
		\label{tab:Height_descr} \bigskip \bigskip
		
		\caption{MLEs and MPSEs of the distribution domain and shape parameters for son's adult height, and objective functions evaluated at $\hat{\boldsymbol{\theta}}$.}
		\begin{tabular}{llccc} \hline
			& Method & $\hat{\boldsymbol{\theta}}$ & logLik & Sum log spacings \\ \hline
			Beta & MLE & $(-3.73, 0.65, 32.67, 4.02) \times 10^3$ & $-3649.4808^*$ & $-8154.9793$ \\
			$\boldsymbol{\theta} = (c,d,\alpha,\beta)$ & MPSE & $(-9.99, 0.69, 96.00, 4.85) \times 10^3$ & $-3649.5255$ & $-8154.9287^*$ \\ \hline
			Kumaraswamy & MLE & $(145.50, 1024.71, 4.43, 2.50 \times 10^6)$ & $-3662.2731$ & $-8170.0036$ \\ 
			$\boldsymbol{\theta} = (c,d,a,b)$ & MPSE & $(144.83, 1065.44, 4.50, 3.47 \times 10^6)$ & $-3662.4585$ & $-8169.7991$ \\  \hline
			BMT & MLE & $(148.26, 199.61,   0.72,   0.70 )$ & $-3663.5402$ & $-8172.2421$ \\ 
			$\boldsymbol{\theta} = (c,d,\kappa_l,\kappa_r)$ & MPSE & $(147.89, 200.04,   0.73,   0.71)$ & $-3664.2763$ & $-8171.3093$ \\ \hline
			\multicolumn{5}{l}{$^*$Highest value for the objective function.} \\
		\end{tabular}
		\label{tab:Height_Es}
	\end{table}
								
	Sample skewness and kurtosis point out an approximately symmetric distribution between the normal and logistic distributions (See Table~\ref{tab:Height_descr} along with Figure~\ref{fig:s2-k_diagram}). Table~\ref{tab:Height_Es} shows the results of a maximum likelihood and maximum product of spacing estimation for the four parameter beta, Kumaraswamy, and BMT distributions. BMT distribution has the lowest values for the objective functions to maximize, but it is the only one with plausible values for parameters $c$ and $d$. Therefore, BMT is the only one of those distributions useful for our interest (stature of the tallest and the shortest son).
								
	Based on the BMT distribution and the maximum likelihood method, we estimate that the population height of tallest and smallest son are $1.48$ and $2.00$ meters, respectively. We also have a curvature degree of $72\%$ for the left tail, a curvature degree of $70\%$ for the right tail, and a very small skewness to the left with an asymmetry of $\kappa_r - \kappa_l = -1.88$ BMT percentage points.
								
	On the other hand, the normal and logistic distributions cannot give us estimates for tallest and shortest height. However, if we use them, the log likelihood function of normal and logistic distributions evaluated at their MLEs are $-3649.5634$ and $-3645.7559$, respectively. Also, $1.48$ and $2.00$ meters are the $0.01\%$ and $99.98\%$ percentiles of the estimated normal distribution, and those same heights are the $0.13\%$ and $99.83\%$ percentiles of the estimated logistic distribution. Considering what happens with the normal and logistic distributions, a truncated (skew) logistic distribution do not neglect our mentioned interest and should have a better fit than beta, Kumaraswamy, BMT, normal and logistic distributions.
								
	\section{Conclusion and comments} \label{sec:conclusions}
								
	We proposed a new double-bounded continuous distribution called BMT. As far as we know, this is one of few distribution families given by parametric equations with a small number of parameters. BMT distribution can be seen as a particular case of our more general definition of the Bezier distribution. As a result, the BMT is a quite flexible unimodal distribution with two shape parameters, on $(0,1)$, that can be interpreted as the curvature degree of each of its tails. 
	
	We also studied some general properties of the BMT distribution. Closed-form expressions for quantile function and some descriptive measures were derived. Given the formula of the BMT quantile function, an easy and fast way of sampling a BMT random variable is possible. Mean and median of a BMT distribution are linear transformations of parameters difference $(\kappa_r - \kappa_r)$, and that difference can be seen as an indicator of asymmetry. An overview of a comparison between beta, Kumaraswamy, and BMT distributions is given as a checklist in Table~\ref{tab:comparison}.
	
	\begin{table}[ht]
		\centering
		\caption{Comparison of BMT, beta, and Kumaraswamy distributions.}
		\begin{tabular}{l|ccc}
			& Beta & Kumaraswamy & BMT \\ 
			\hline Closed-form expression for cdf and pdf? &  & \checkmark &  \\ 
			\hline Closed-form expression for quantile function? &  & \checkmark & \checkmark \\ 
			\hline Closed-form expression for mean, variance, & \multirow{2}{*}{\checkmark} & \multirow{2}{*}{ } & \multirow{2}{*}{\checkmark} \\ 
			skewness, and kurtosis? \\
			\hline Symmetric shapes? & \checkmark &  & \checkmark \\ 
			\hline Different shapes aside from unimodal & \multirow{2}{*}{\checkmark} & \multirow{2}{*}{\checkmark} & \multirow{2}{*}{ } \\ 
			(U, J, reverse J shapes)? \\
			\hline 
		\end{tabular}
		\label{tab:comparison}
	\end{table}
									
	In addition to the properties mentioned, simulations and three applications show our distribution functionality. Maximum likelihood and maximum product spacing methods, in conjunction with the box-constrained optimization proposed by \citet{gay1984trust} (implemented in \verb|R|'s function \verb|nlminb|), performed well and converged for all cases, with diverse sample sizes and with two or four parameters. 
	
	The BMT distribution clearly stands out for its suitability when it comes to estimate plausible domain parameters. Not only that, but it could be useful to handle unimodal data otherwise questionably assumed on the whole real line or on a semi-infinite interval. Applications showed that four parameter beta and Kumaraswamy, with their possibility of very light tails, lead to estimates for domain parameters very far from sample minimum and maximum; while that does not happen with the BMT distribution. Equally noteworthy, Figure~\ref{fig:s2-k_diagram} shows that the BMT distribution can handle data that the beta distribution do not, given their possible values of (population) skewness and kurtosis. All the above ensures that the BMT distribution is a genuine alternative to existent continuous univariate distributions supported on a bounded interval.
	
	With regard to the computational aspect, we note that the optimization algorithms from \verb|optimx| \citep{nash2011unifying} perform very nicely solving a two parameter estimation problem for the beta, the Kuramaswamy, or the BMT distribution. First and second order Kuhn-Karush-Tucker (KKT) optimality conditions were numerically satisfied for simulations and applications with two unknown parameters. 
	
	On the contrary, optimization with the four parameter beta or Kumaraswamy distributions do not work well. Additional tests showed that optimization algorithms of a four parameter estimation problem for beta and Kumaraswamy distributions are very dependent of the starting point; the second order KKT optimality condition is not met or cannot be checked; parameters at different scales are problematic; and the worst of all, two very distant estimates can lead to very close values of the objective functions. To illustrate, for the log likelihood function $(\ell)$ of the second application we have that, 
	\[\ell(\boldsymbol{\theta}) = \ell\left(0.04, 7.99 \times 10^5, 6.31, 2.04 \times 10^7\right) = 36.2397826,\] 
	and 
	\[\ell(\boldsymbol{\theta}) = \ell\left(0.04, 7.99 \times 10^7, 6.31, 2.04 \times 10^9\right) = 36.2397827.\]
	By comparison, to the optimization algorithms, the four parameter estimation problem for the BMT distribution seems to be as well-behaved as the two parameter problem. Even if domain parameters are on a very different scale from the BMT shape parameter, only for the BMT distribution, a linear transformation of the data solves any possible issue with that difference of scales. In conclusion, to model a variable with unknown domain, we strongly recommend using the four parameter BMT distribution over the beta or Kumaraswamy distributions. 
	
	As part of first author PhD thesis, we already worked on useful alternative parametrizations and estimation methods for the BMT distribution. Regression using the BMT distribution seems straightforward, at least numerically, following the ideas of \citet{ferrari2004beta,mitnik2013kumaraswamy,cepeda2014beta,klein2015bayesian}. Since the BMT was motivated by our vision about the needs of the item response theory (IRT), we have high expectations for IRT models using the BMT distribution. Indeed, we intend to compare a proposed BMT IRT model with the skew-normal IRT model worked by \citet{bazan2006skew,azevedo2011bayesian}. 
	
	Future research is open to new mathematical properties, extensions, and applications for the BMT distribution. Likewise, comparative analysis with truncated distributions could be important and informative. 
								
	\section*{Acknowledgement(s)}
								
	The authors are thankful to the referees and editors for the useful comments.
								
	\section*{Funding}
								
	This work is a result of the first author doctoral thesis. Each semester of first author doctoral studies, an academic merit-based scholarship was granted. Also, this work was partially supported by Colciencias [grant number 0039-2013] and Universidad Nacional de Colombia [grant number DIB-2016-36008].
								
	\section*{Notes}
								
	The data processing, parameter estimation, and all the numerical calculations required for this work were performed using \verb|R| \citep{RCoreTeam2015language} and an \verb|R| package developed by the first author called \verb|BMT|, which can be found at \url{http://CRAN.R-project.org/package=BMT}. 
	In addition, some functions of the following contributed packages were used: \verb|dplyr| \citep{wickham2015dplyr}, \verb|e1071| \citep{meyer2015e1071}, \verb|fields| \citep{nychka2016fields}, \verb|fitdistrplus| \citep{delignette-muller2015fitdistrplus}, \verb|optimx| \citep{nash2011unifying}, and \verb|partitions| \citep{hankin2006additive}.
								
	\bibliographystyle{Chicago}

\newpage
	\appendix 
								
	\linespread{1.5}
	
	\section{Condition (ii) of the Definition \ref{def:BezierDist2}} \label{ap:cond2BezierDef}
								
	\begin{proposition}
		The Bezier curve associated to $\mathrm{F}_X$, given by Equations (\ref{eq:EBezier}), is a non-decreasing function, if and only if, 
		\begin{equation}\label{eq:condition2}
			\sum_{i=0}^{n-1} \left( b_{i+1,1} - b_{i,1} \right) \mathrm{B}_i^{n-1}(t) \geq 0 \text{, and, } \sum_{i=0}^{n-1} \left( b_{i+1,2} - b_{i,2} \right) \mathrm{B}_i^{n-1}(t)  \geq 0,
		\end{equation}
		for all $t \in [0,1]$.
	\end{proposition}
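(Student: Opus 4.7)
The plan is to reduce the monotonicity of the $F_X$-curve to the non-negativity of the derivatives of the two coordinate polynomials, and then identify those derivatives with the sums appearing in (\ref{eq:condition2}) via formula (\ref{eq:dBezier}).

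First I would observe that outside $[0,1]$ the functions $\mathrm{x}(t)$ and $\mathrm{y}_\mathrm{F}(t)$ are already monotone by construction (linear in $t$ for $\mathrm{x}$, with jumps to the constant values $0$ and $1$ for $\mathrm{y}_\mathrm{F}$, compatible with condition~(i)), so the only question is whether the Bezier pieces $\mathrm{b}_1^n(t)$ and $\mathrm{b}_2^n(t)$ trace out a non-decreasing curve for $t\in[0,1]$. I would then argue that $F_X$ is a well-defined non-decreasing function on $[b_{0,1},b_{n,1}]$ if and only if both $\mathrm{b}_1^n$ and $\mathrm{b}_2^n$ are non-decreasing in $t$ on $[0,1]$: if $\mathrm{b}_1^n$ ever decreased, the parametric curve would fold back and $F_X$ would not even be single-valued in $x$; given monotonicity of $\mathrm{b}_1^n$, the function $F_X(x)=\mathrm{b}_2^n\bigl((\mathrm{b}_1^n)^{-1}(x)\bigr)$ is non-decreasing in $x$ iff $\mathrm{b}_2^n$ is non-decreasing in $t$.

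Next, since both coordinate functions are polynomials on $[0,1]$, each is non-decreasing on this interval iff its derivative is pointwise $\geq 0$ there. I would then specialize equation~(\ref{eq:dBezier}) at $r=1$ to get
\[
\frac{d}{dt}\,\mathrm{b}_k^n(t) \;=\; n\sum_{i=0}^{n-1}(b_{i+1,k}-b_{i,k})\,\mathrm{B}_i^{n-1}(t), \qquad k\in\{1,2\},
\]
and note that, because $n\geq 1$, the non-negativity of these derivatives on $[0,1]$ is precisely the pair of inequalities in (\ref{eq:condition2}).

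Putting the two equivalences together gives the iff. The step that needs the most care is the one separating the roles of the two coordinates in the definition of ``non-decreasing $F_X$'': the coordinate $\mathrm{b}_1^n$ must be non-decreasing for the curve to define a function at all, while $\mathrm{b}_2^n$ must be non-decreasing for that function to be non-decreasing, and both properties are captured uniformly by requiring non-negativity of the respective derivatives. The rest is a direct substitution into (\ref{eq:dBezier}).
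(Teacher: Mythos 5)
Your ``if'' direction is correct and coincides with the paper's: identifying the two sums in (\ref{eq:condition2}) with $\frac{1}{n}\frac{d}{dt}\mathrm{b}_k^n(t)$ via (\ref{eq:dBezier}) at $r=1$, and concluding that non-negativity of both derivatives makes the slope $\frac{\frac{d}{dt}\mathrm{b}_2^{n}(t)}{\frac{d}{dt}\mathrm{b}_1^n(t)}$ non-negative, is exactly what the paper does. The gap is in the ``only if'' direction, at precisely the step you flag as delicate. It is not true that a local decrease of $\mathrm{b}_1^n$ forces the curve to fail to be single-valued, so ``$\mathrm{F}_X$ non-decreasing'' does not directly force both coordinate polynomials to be non-decreasing in $t$. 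Two families of cases defeat your fold-back argument: (a) both derivatives may be non-positive on all of $[0,1]$ (control points indexed in reverse orientation), in which case the traced set is still the graph of a non-decreasing function while both sums in (\ref{eq:condition2}) are $\leq 0$ rather than $\geq 0$; and (b) the degenerate collinear case, where $\frac{d}{dt}\mathrm{b}_1^n$ and $\frac{d}{dt}\mathrm{b}_2^n$ are proportional and change sign together on a common subset of $[0,1]$ --- the curve then retraces a line segment back and forth, its image is still the graph of a non-decreasing (linear) function, yet neither derivative is pointwise non-negative. In both cases $\mathrm{F}_X$ is a perfectly good non-decreasing function but (\ref{eq:condition2}) fails for the given indexing of the control points.

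The paper's proof spends essentially all of its effort on exactly these cases: it works with the sign of the ratio of derivatives (allowing vertical tangents on a null set and cancelling common roots of the two polynomial derivatives), and in the ``only if'' direction it disposes of case (a) by reversing the order of the control points, and of case (b) by observing that simultaneous sign changes force the two derivative polynomials to share roots, hence be proportional, hence the control points collinear and re-orderable so that (\ref{eq:condition2}) holds. In other words the proposition is really an ``iff up to re-indexing the control points,'' and your argument needs either to incorporate these two case analyses or to state explicitly an orientation/non-degeneracy hypothesis under which your single-valuedness claim is actually valid.
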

								
	\begin{proof}
		First, $\mathrm{F}_X$ is a non-decreasing function, if and only if, the tangent line slope of its curve is always greater or equal to zero (or $\pm \infty$ for a vertical tangent at a set of measure zero). Second, the tangent line slope of a curve given by parametric equations $\mathrm{b}_1^n(t)$ and $\mathrm{b}_2^n(t)$ is $\frac{\frac{d}{dt}\mathrm{b}_2^{n}(t)}{\frac{d}{dt}\mathrm{b}_1^n(t)}$, with vertical tangents at values of $t$ for which $\frac{d}{dt}\mathrm{b}_1^{n}(t) = 0$, provided $\frac{d}{dt}\mathrm{b}_2^{n}(t) \neq 0$. Third, since $\frac{d}{dt}\mathrm{b}_1^n(t)$ and $\frac{d}{dt}\mathrm{b}_2^n(t)$ are polynomials, for any $t^*$ such that $\frac{d}{dt}\mathrm{b}_1^n(t^*) = 0$ and $\frac{d}{dt}\mathrm{b}_2^n(t^*) = 0$, $t^*$ is a root of both, and $(t-t^*)$ can be factorized and simplified from numerator and denominator of $\frac{\frac{d}{dt}\mathrm{b}_2^{n}(t)}{\frac{d}{dt}\mathrm{b}_1^n(t)}$. Fourth, from the $r$-th derivative of a Bezier curve (Equation (\ref{eq:dBezier})), we have that $\frac{d}{dt}\mathrm{b}_1^n(t)=n \sum_{i=0}^{n-1} \left( b_{i+1,1}-b_{i,1} \right) \mathrm{B}_i^{n-1}(t)$ and $\frac{d}{dt}\mathrm{b}_2^{n}(t)=n \sum_{i=0}^{n-1} \left( b_{i+1,2}-b_{i,2} \right) \mathrm{B}_i^{n-1}(t)$. Therefore, including horizontal tangent lines ($\frac{d}{dt}\mathrm{b}_2^{n}(t) = 0$, provided $\frac{d}{dt}\mathrm{b}_1^{n}(t) \neq 0$) at a set of measure zero.
									
		$(\Leftarrow)$ If we have (\ref{eq:condition2}), then for all $t \in [0,1]$, $\frac{\frac{d}{dt}\mathrm{b}_2^{n}(t)}{\frac{d}{dt}\mathrm{b}_1^n(t)} \geq 0$. Which in turn implies that $\mathrm{F}_X$ is a non-decreasing function.
									
		$(\Rightarrow)$ If $\mathrm{F}_X$ is a non-decreasing function, then $\frac{\frac{d}{dt}\mathrm{b}_2^{n}(t)}{\frac{d}{dt}\mathrm{b}_1^n(t)} \geq 0$. And, to guarantee that inequality we have that:
									
		If $\frac{d}{dt}\mathrm{b}_1^n(t) \geq 0$ for all $t \in [0,1]$, then we must have $\frac{d}{dt}\mathrm{b}_2^n(t) \geq 0$ for all $t \in [0,1]$, and vice versa, meaning that (\ref{eq:condition2}) is fulfilled for all $t \in [0,1]$.
									
		If $\frac{d}{dt}\mathrm{b}_1^n(t) \leq 0$ for all $t \in [0,1]$, then we must have $\frac{d}{dt}\mathrm{b}_2^n(t) \leq 0$ for all $t \in [0,1]$, and vice versa, meaning that the Bezier points are indexed in ``inverse orientation.'' (as $t$ increases, the curve emerges or is graphed from right to left). The Bezier curve with control points $\mathbf{b}_n, \mathbf{b}_{n-1}, \dots, \mathbf{b}_0$ produces the same curve and fulfills (\ref{eq:condition2}) for all $t \in [0,1]$.
									
		If $\frac{d}{dt}\mathrm{b}_1^n(t) < 0$ or $\frac{d}{dt}\mathrm{b}_2^n(t) < 0$ only for $t \in \mathcal{A} \subsetneqq [0,1]$. Then both polynomials have to be simultaneously negative only in $\mathcal{A}$ (and non negative in $\mathcal{A}^C$), implying that they must have the same roots. And, if they have the same roots, then polynomials are multiples of each other, the curve is a line, Bezier control points are collinear, and we can rearrange them in such way that they fulfill (\ref{eq:condition2}) for all $t \in [0,1]$. 
	\end{proof}
								
\newpage
	\section{Raw moments of the Bezier distribution}\label{ap:rawMBezier}

	\begin{proposition}
		The $r$-th raw moment of a Bezier random variable $X$ is given by,
		\begin{equation} 
		\mu_{\;X}^{*(r)} = \frac{r!}{r+1} \sum_{k_0+\dots+k_n=r} \sum_{j=0}^{n-1} \frac{\left( \prod_{i=0}^n \binom{n}{i}^{k_i} b_{i,1}^{k_i} \right) \binom{n-1}{j} \left( b_{j+1,2} - b_{j,2} \right) }{k_0! \dots k_n! \, \binom{(r+1)n-1}{j+\sum_{i=0}^n ik_i}}.
		\end{equation}
	\end{proposition}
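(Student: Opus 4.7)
The plan is to start from the parametric-equations expression for moments given earlier in the paper, namely
\[\mu_{X}^{*(r)} = \int_{0}^{1} \bigl(\mathrm{x}(t)\bigr)^{r}\, \mathrm{y}_{\mathrm F}'(t)\, dt,\]
and reduce the double expansion to a single multinomial sum whose terms are beta integrals. The Bernstein form (\ref{eq:Bezier}) gives $\mathrm{x}(t)=\sum_{i=0}^{n} b_{i,1}\binom{n}{i}t^{i}(1-t)^{n-i}$, and formula (\ref{eq:dBezier}) for $r=1$ applied to the second coordinate gives $\mathrm{y}_{\mathrm F}'(t)=n\sum_{j=0}^{n-1}(b_{j+1,2}-b_{j,2})\binom{n-1}{j}t^{j}(1-t)^{n-1-j}$.

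First I would apply the multinomial theorem to $\mathrm{x}(t)^{r}$, obtaining
\[\mathrm{x}(t)^{r}=\sum_{k_{0}+\dots+k_{n}=r}\frac{r!}{k_{0}!\cdots k_{n}!}\left(\prod_{i=0}^{n}\binom{n}{i}^{k_{i}}b_{i,1}^{k_{i}}\right)t^{\sum_{i}i\,k_{i}}(1-t)^{\sum_{i}(n-i)k_{i}}.\]
Using $\sum_{i}k_{i}=r$, the exponent of $(1-t)$ simplifies to $nr-\sum_{i}i\,k_{i}$. Multiplying by $\mathrm{y}_{\mathrm F}'(t)$ and interchanging the (finite) sums with the integral yields a sum of integrals of the form $\int_{0}^{1}t^{K+j}(1-t)^{nr-K+n-1-j}\,dt$, where $K=\sum_{i}i\,k_{i}$; each is a beta integral equal to $\dfrac{(K+j)!\bigl(n(r+1)-1-K-j\bigr)!}{\bigl(n(r+1)\bigr)!}$.

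The next step is the crucial algebraic rewriting that produces the stated closed form. The total degree in the exponents is always $n(r+1)-1$, so the beta integral can be packaged as $\dfrac{1}{n(r+1)}\cdot\dfrac{1}{\binom{n(r+1)-1}{\,K+j\,}}$. After pulling the factor $n$ from $\mathrm{y}_{\mathrm F}'(t)$ and cancelling with $n(r+1)$, one recovers the overall prefactor $\tfrac{r!}{r+1}$ displayed in (\ref{eq:BezierRawMoment}), while the remaining factors $\binom{n-1}{j}(b_{j+1,2}-b_{j,2})$ and $\prod_{i}\binom{n}{i}^{k_{i}}b_{i,1}^{k_{i}}$ sit in the numerator and $k_{0}!\cdots k_{n}!\,\binom{(r+1)n-1}{j+\sum_{i}i\,k_{i}}$ in the denominator, matching the formula exactly.

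The main obstacle I anticipate is bookkeeping rather than conceptual difficulty: one must be careful that the multinomial sum ranges over compositions of $r$ into $n+1$ nonnegative parts, that the index $j$ in the derivative expansion runs over $0,\dots,n-1$, and that the conversion from the factorial ratio to the single binomial coefficient $\binom{(r+1)n-1}{j+\sum_{i}i\,k_{i}}$ is done consistently (in particular, checking that $K+j$ indeed lies in $\{0,\dots,n(r+1)-1\}$ so the binomial coefficient is nonzero). Once this indexing is pinned down, the identity reduces to the elementary beta-integral evaluation and the formula follows.
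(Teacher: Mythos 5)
Your proposal is correct and follows essentially the same route as the paper's proof: the same reduction to $\int_0^1 (\mathrm{x}(t))^r\, \mathrm{y}'_\mathrm{F}(t)\,dt$, the same multinomial expansion of $\mathrm{x}(t)^r$ against the derivative expansion of $\mathrm{y}_\mathrm{F}$, and the same final bookkeeping yielding the prefactor $\tfrac{r!}{r+1}$ and the binomial coefficient $\binom{(r+1)n-1}{j+\sum_i i k_i}$. The only cosmetic difference is that you evaluate the resulting integral directly as a beta integral, whereas the paper invokes the Bernstein product identity $\mathrm{B}_i^n(t)\mathrm{B}_j^m(t)=\frac{\binom{n}{i}\binom{m}{j}}{\binom{n+m}{i+j}}\mathrm{B}_{i+j}^{n+m}(t)$ together with $\int_0^1\mathrm{B}_i^n(t)\,dt=\frac{1}{n+1}$; these amount to the same computation.
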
				

	\begin{proof}
	The $r$-th raw moment of a random variable $X$ is,				
	\begin{align*}
		\mu_{\;X}^{*(r)} &= E\left[ X^r \right] = \int x^r \, \mathrm{F}_X(x) \, dx \\
		\intertext{considering that $\mathrm{F}_X(x) = \mathrm{y}_\mathrm{F} \left( \mathrm{x}^{-1}(x) \right)$, then,}
		&= \int \left( \mathrm{x}(t) \right)^r  \mathrm{y}'_\mathrm{F}(t)  \, dt,  \\
		\intertext{from the definition of a Bezier distribution (Definition \ref{def:BezierDist2}), the equation of a Bezier curve (\ref{eq:Bezier}), and the equation of a derivative of a Bezier curve (\ref{eq:dBezier}), we have that,}
		 &= \int_0^1 \left( \sum_{i=0}^{n} b_{i,1}  \mathrm{B}_i^n(t) \right)^r \left( n \sum_{j=0}^{n-1} \left( b_{j+1,2} - b_{j,2} \right) \mathrm{B}_{j}^{n-1}(t) \right)  \, dt,  \\
		\intertext{using the multinomial theorem, and accordingly, taking quantities of the form $x^0$ equal to $1$, even when $x$ equals zero,}
		 &= \int_0^1 \left( \sum_{k_0+\dots+k_n=r} \frac{r!}{k_0! \dots k_n!} \prod_{i=0}^n b_{i,1}^{k_i} \left(\mathrm{B}_i^n(t)\right)^{k_i} \right) \left( n \sum_{j=0}^{n-1} \left( b_{j+1,2} - b_{j,2} \right) \mathrm{B}_{j}^{n-1}(t) \right)  \, dt,  \\
		\intertext{rearranging,} 
		 &= n(r!) \sum_{k_0+\dots+k_n=r} \sum_{j=0}^{n-1} \left(\frac{\prod_{i=0}^n b_{i,1}^{k_i} \left( b_{j+1,2} - b_{j,2} \right)}{k_0! \dots k_n!} \int_0^1 \prod_{i=0}^n \left(\mathrm{B}_i^n(t)\right)^{k_i}  \mathrm{B}_{j}^{n-1}(t)  \, dt \right),  \\
		\intertext{since $\mathrm{B}_i^n(t) \mathrm{B}_j^m(t)= \frac{\binom{n}{i}\binom{m}{j}}{\binom{n+m}{i+j}} \mathrm{B}_{i+j}^{n+m}(t)$ \cite[Section 6.10]{farin2002curves}, then,}
		 &= n(r!) \sum_{k_0+\dots+k_n=r} \sum_{j=0}^{n-1} \left(\frac{\prod_{i=0}^n b_{i,1}^{k_i} \left( b_{j+1,2} - b_{j,2} \right)}{k_0! \dots k_n!} \frac{\binom{n-1}{j} \prod_{i=0}^n \binom{n}{i}^{k_i}}{\binom{(r+1)n-1}{j+\sum_{i=0}^n ik_i}} \int_0^1 \mathrm{B}_{j+\sum_{i=0}^n ik_i}^{(r+1)n-1}(t)  \, dt \right), \\
		\intertext{and, taking into account that $\int_{0}^{1} \mathrm{B}_i^n(t) \, dt = \frac{1}{n+1}$ \cite[Section 6.10]{farin2002curves}, we obtain that,}
		 &= \frac{r!}{r+1} \sum_{k_0+\dots+k_n=r} \sum_{j=0}^{n-1} \frac{\left( \prod_{i=0}^n \binom{n}{i}^{k_i} b_{i,1}^{k_i} \right) \binom{n-1}{j} \left( b_{j+1,2} - b_{j,2} \right) }{k_0! \dots k_n!  \binom{(r+1)n-1}{j+\sum_{i=0}^n ik_i}}.
	\end{align*}
	\end{proof}

\newpage

	\section{BMT distribution descriptive measures}\label{ap:descr}

\begin{figure}[ht]
	\centering
	\begin{subfigure}{0.32\textwidth}
		\includegraphics[width=\textwidth]{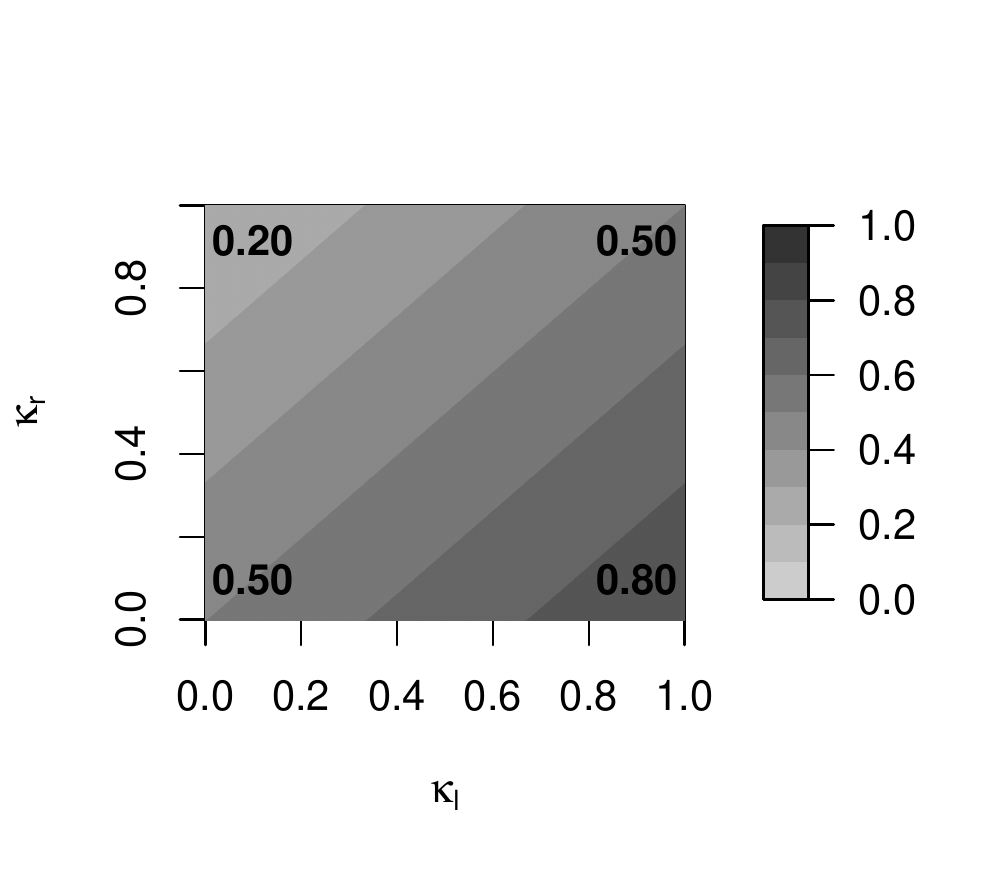}
		\caption{Mean.}
		\label{fig:BMTmean}
	\end{subfigure}
	\begin{subfigure}{0.32\textwidth}
		\includegraphics[width=\textwidth]{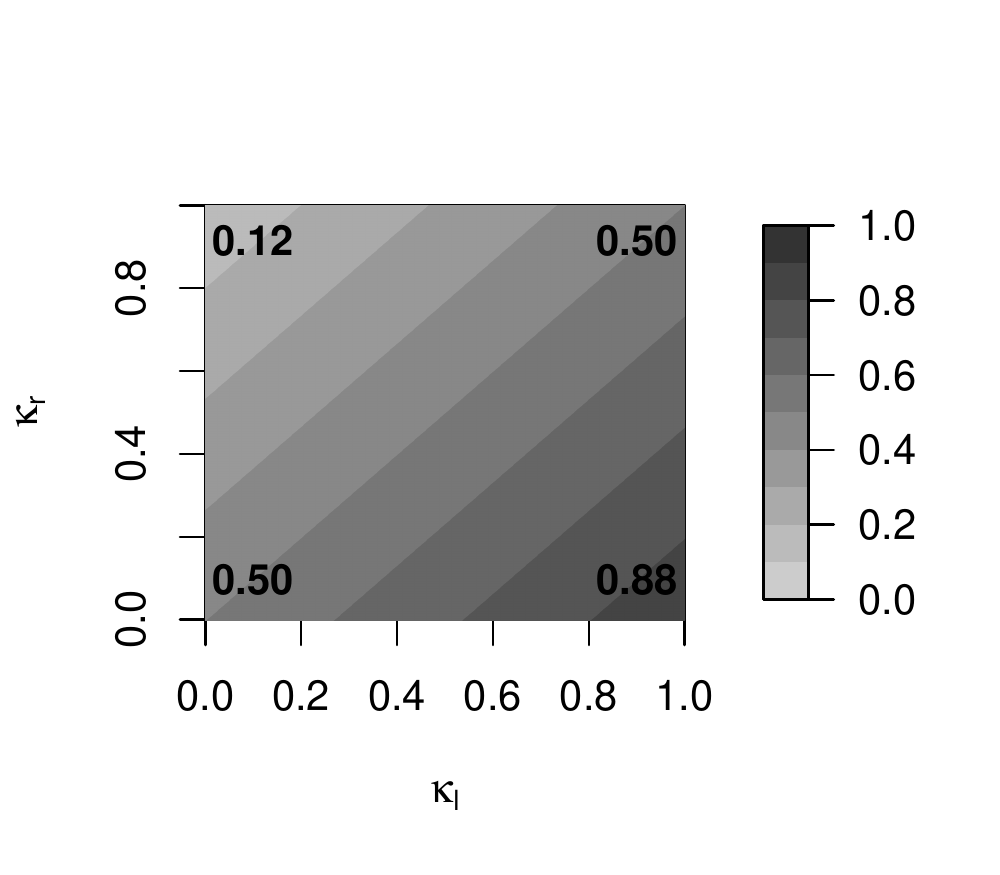}
		\caption{Median.}
		\label{fig:BMTmedian}
	\end{subfigure}
	\begin{subfigure}{0.32\textwidth}
		\includegraphics[width=\textwidth]{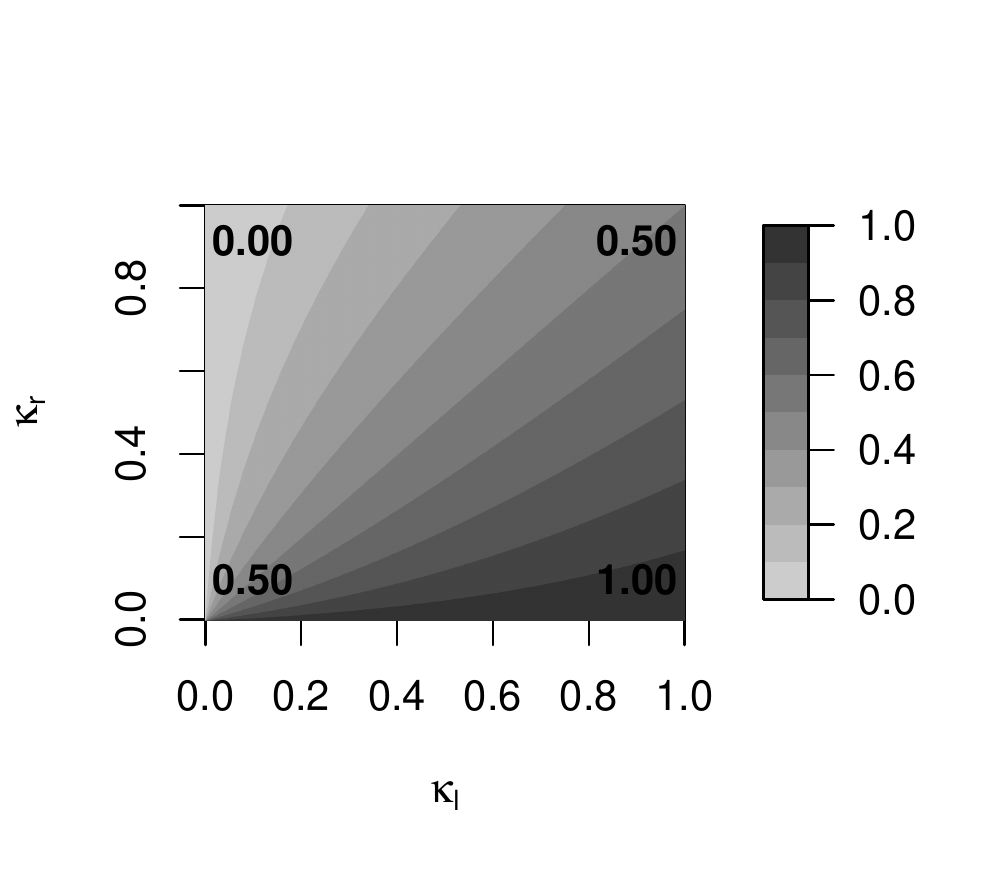}
		\caption{Mode.}
		\label{fig:BMTmode}
	\end{subfigure}
	
	\begin{subfigure}{0.32\textwidth}
		\includegraphics[width=\textwidth]{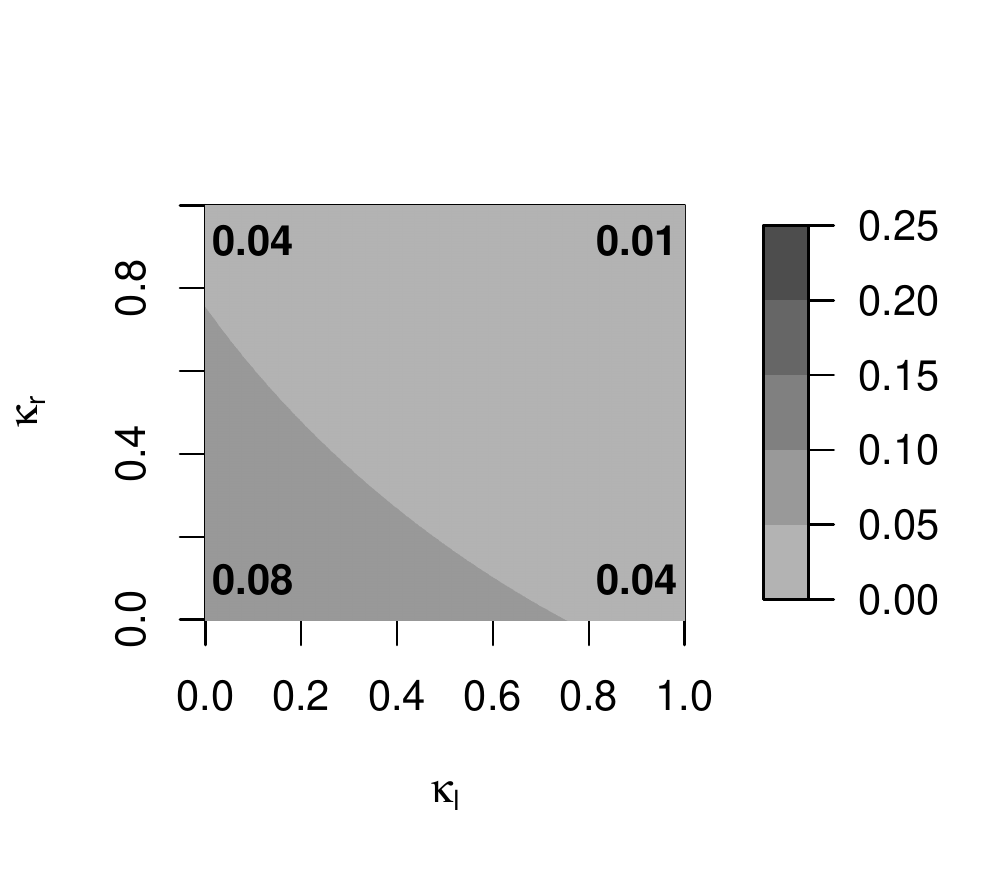}
		\caption{Variance.}
		\label{fig:BMTvar}
	\end{subfigure}
	\begin{subfigure}{0.32\textwidth}
		\includegraphics[width=\textwidth]{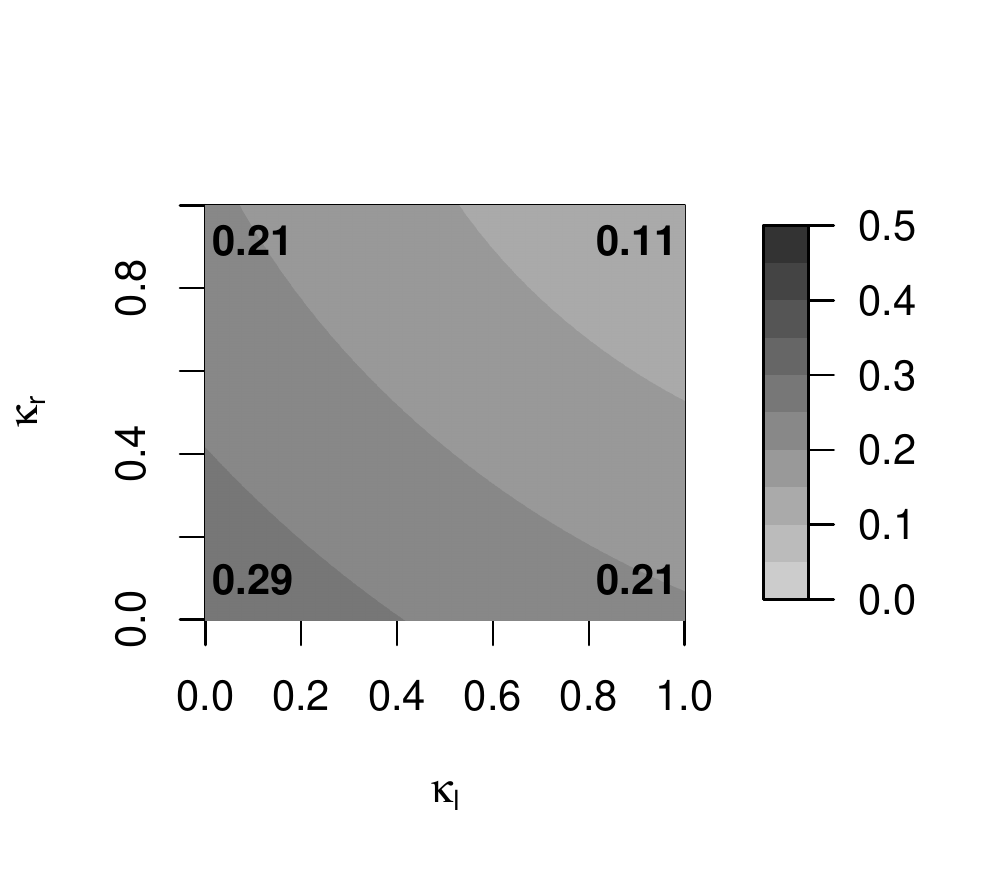}
		\caption{Standard deviation.}
		\label{fig:BMTsd}
	\end{subfigure}
	\begin{subfigure}{0.32\textwidth}
		\includegraphics[width=\textwidth]{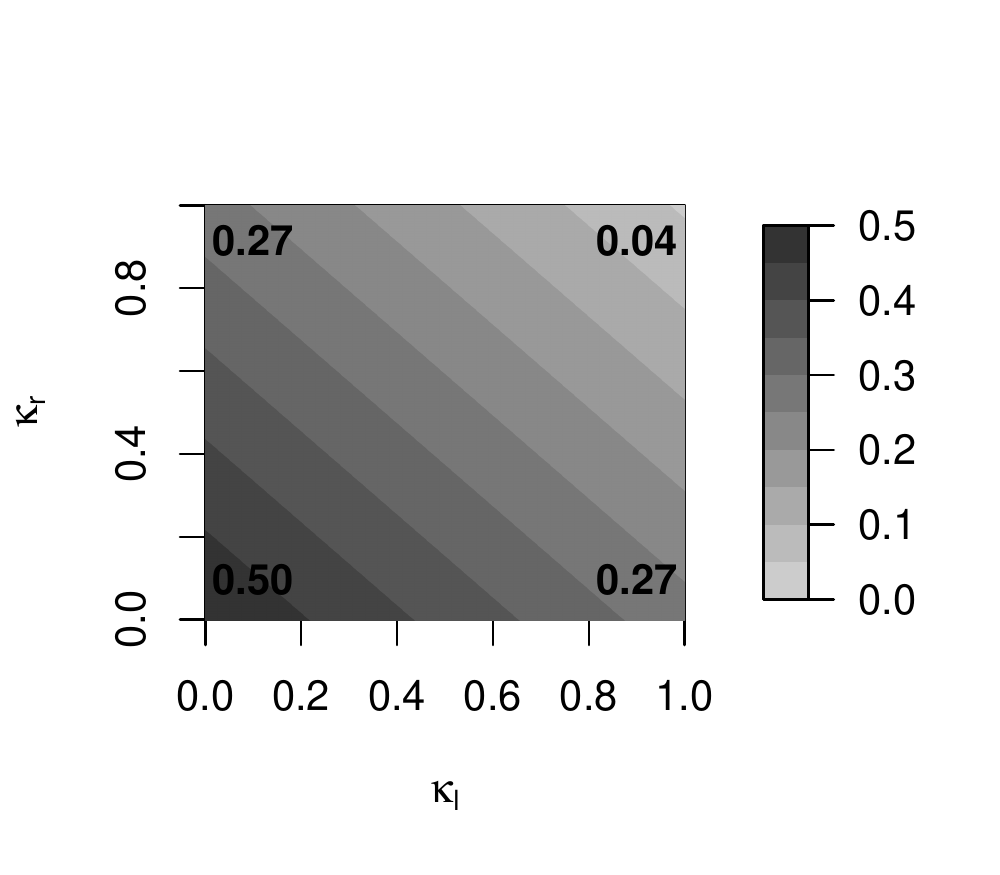}
		\caption{Interquartile range.}
		\label{fig:BMTiqr}
	\end{subfigure}
	
	\begin{subfigure}{0.32\textwidth}
		\includegraphics[width=\textwidth]{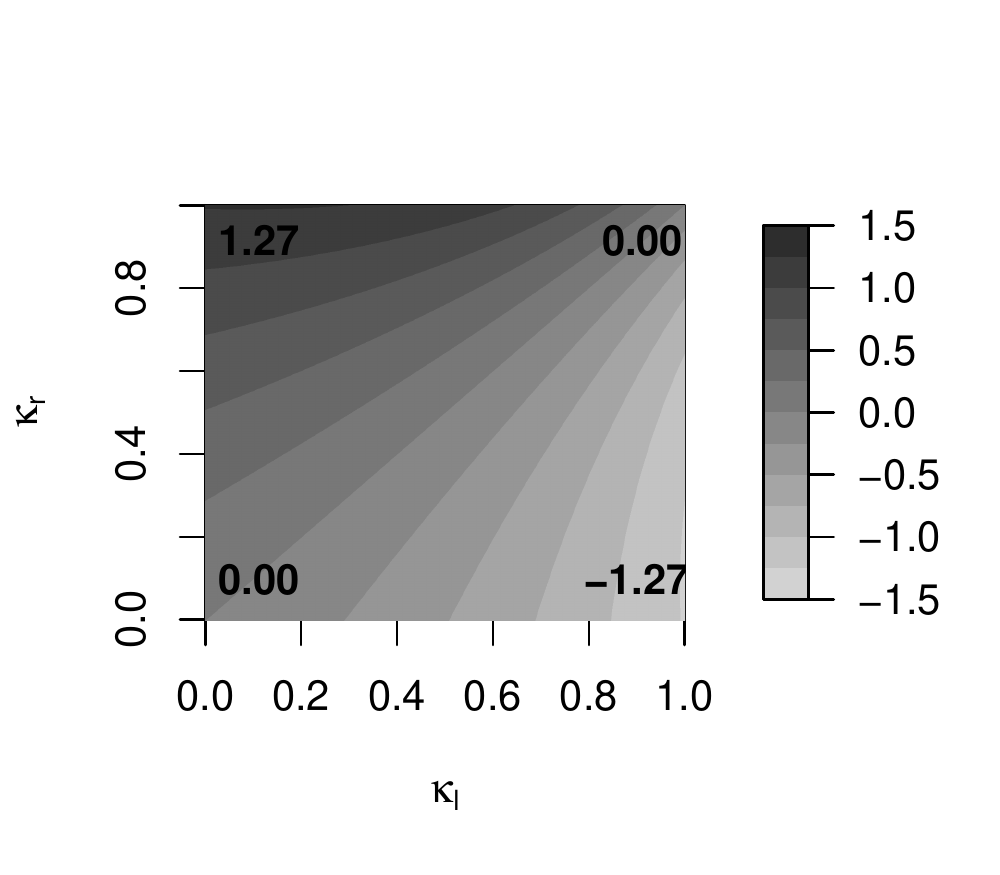}
		\caption{Pearson's skewness.}
		\label{fig:BMTskew}
	\end{subfigure}
	\begin{subfigure}{0.32\textwidth}
		\includegraphics[width=\textwidth]{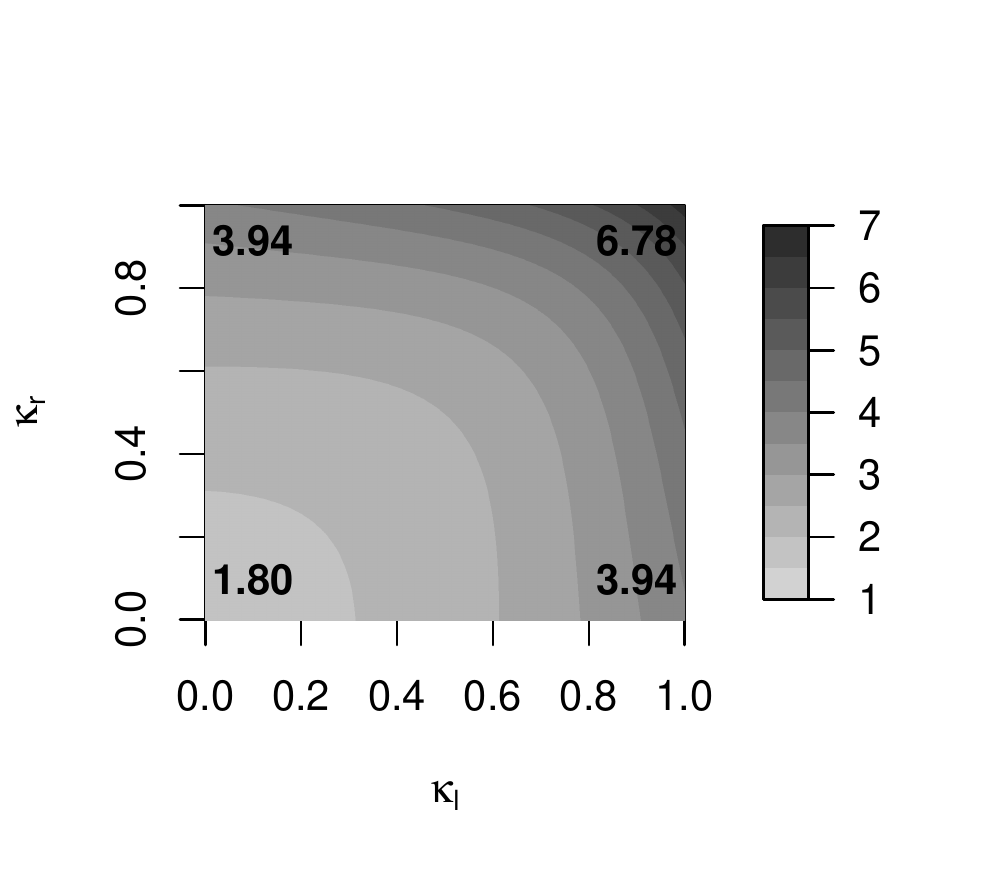}
		\caption{Pearson's kurtosis.}
		\label{fig:BMTkurt}
	\end{subfigure}
	\caption{Contour plots for some descriptive measures of the BMT distribution.}
	\label{fig:BMTmeasures}
\end{figure}

\newpage
\section{Results of simulations and parameter recovery} \label{ap:ParRecov}

\begin{table}[ht]
	\centering
	\caption{Mean, median, and standard deviation of the absolute difference between a parameter and its estimates for $1000$ samples, by sample size ($n$), parameter vector ($\boldsymbol{\theta}$), and estimation method (MLE: Maximum likelihood and MPSE: Maximum product of spacing).}
	\begin{tabular}{cclrrrrrr} \hline
		& & & \multicolumn{2}{c}{$\boldsymbol{\theta}_1 = \left(0.5,0.5\right)$} & \multicolumn{2}{c}{$\boldsymbol{\theta}_2 = \left(0.2,0.4\right)$} & \multicolumn{2}{c}{$\boldsymbol{\theta}_3 = \left(0.9,0.1\right)$} \\ \hline
		\multirow{6}{*}{n=30} & \multirow{3}{*}{MLE} & mean & 0.0980 & 0.1039 & 0.0902 & 0.1207 & 0.0863 & 0.0390 \\ 
		& & median & 0.0804 & 0.0848 & 0.0746 & 0.1071 & 0.0859 & 0.0319 \\ 
		& & sd & 0.0799 & 0.0811 & 0.0739 & 0.0911 & 0.0655 & 0.0327 \\ \cline{2-9}
		& \multirow{3}{*}{MPSE} & mean & 0.1099 & 0.1162 & 0.1040 & 0.1374 & 0.1015 & 0.0449 \\ 
		& & median & 0.0883 & 0.0978 & 0.0936 & 0.1176 & 0.0922 & 0.0396 \\ 
		& & sd & 0.0899 & 0.0898 & 0.0693 & 0.0997 & 0.0836 & 0.0319 \\ \hline
		\multirow{6}{*}{n=300} & \multirow{3}{*}{MLE} & mean & 0.0308 & 0.0316 & 0.0288 & 0.0369 & 0.0319 & 0.0123 \\ 
		& & median & 0.0259 & 0.0264 & 0.0249 & 0.0309 & 0.0268 & 0.0103 \\ 
		& & sd & 0.0230 & 0.0239 & 0.0220 & 0.0279 & 0.0244 & 0.0094 \\ \cline{2-9}
		& \multirow{3}{*}{MPSE} & mean & 0.0313 & 0.0321 & 0.0299 & 0.0375 & 0.0328 & 0.0127 \\ 
		& & median & 0.0261 & 0.0265 & 0.0250 & 0.0316 & 0.0279 & 0.0108 \\ 
		& & sd & 0.0236 & 0.0242 & 0.0224 & 0.0288 & 0.0248 & 0.0095 \\ \hline
		\multirow{6}{*}{n=3000} & \multirow{3}{*}{MLE} & mean & 0.0098 & 0.0095 & 0.0089 & 0.0115 & 0.0098 & 0.0041 \\ 
		& & median & 0.0083 & 0.0078 & 0.0074 & 0.0097 & 0.0083 & 0.0033 \\ 
		& & sd & 0.0073 & 0.0074 & 0.0070 & 0.0084 & 0.0076 & 0.0031 \\ \cline{2-9}
		& \multirow{3}{*}{MPSE} & mean & 0.0098 & 0.0095 & 0.0090 & 0.0115 & 0.0098 & 0.0041 \\ 
		& & median & 0.0082 & 0.0077 & 0.0075 & 0.0099 & 0.0082 & 0.0034 \\ 
		& & sd & 0.0073 & 0.0074 & 0.0070 & 0.0084 & 0.0076 & 0.0031 \\ \hline
	\end{tabular}
	\label{tab:ParRecov}
\end{table}	
								
\end{document}